\newcommand{\Irm}{\mathrm{I}}
\newcommand{\Trm}{\mathrm{T}}
\newcommand{\Acal}{\mathcal{A}}
\newcommand{\Bcal}{\mathcal B}
\newcommand{\Fcal}{\mathcal{F}}
\newcommand{\Hcal}{\mathcal{H}}
\newcommand{\Mcal}{\mathcal{M}}
\newcommand{\Ffrak}{\mathfrak{F}}
\newcommand{\Sfrak}{\mathfrak{S}}
\newcommand{\flatS}{\mathfrak{F}}
\newcommand{\Cscr}{\mathscr{C}}
\newcommand{\Pscr}{\mathscr{P}}
\newcommand{\Wscr}{\mathscr{W}}
\newcommand{\Abf}{\mathbf{A}}
\newcommand{\Bbf}{\mathbf{B}}
\newcommand{\Cbf}{\mathbf{C}}
\newcommand{\Ebf}{\mathbf{E}}
\newcommand{\Gbf}{\mathbf{G}}
\newcommand{\Sbf}{\mathbf{S}}
\newcommand{\Ebb}{\mathbb{E}}
\DeclareMathOperator{\Reg}{Reg}
\DeclareMathOperator{\dist}{dist}
\newcommand{\set}[2]{\left\{\, #1 \  \textup{\textbf{:}}\  #2 \,\right\}}
\newcommand{\N}{\mathbb{N}}
\newcommand{\loc}{\mathrm{loc}}
\newcommand{\spt}{\mathrm{spt}}
\newcommand{\Sing}{\mathrm{Sing}}
\newcommand{\toweakstar}{\overset{*}\rightharpoonup}
\newcommand{\mres}{\mathbin{\vrule height 1.6ex depth 0pt width
0.13ex\vrule height 0.13ex depth 0pt width 1.3ex}}
\newcommand{\eps}{\epsilon}
\newcommand{\proofstep}[1]{\textit{#1}}
\renewcommand{\eps}{\varepsilon}
\newcommand{\mbf}{{\textit{\text{\textbf{m}}}}}
\title{Hausdorff measure bounds for density-$Q$ flat singularities of minimizing integral currents}
 \author{Gianmarco Caldini and Anna Skorobogatova}
 \address{}
 \email{}
\newtheorem{theorem}{Theorem}[section]
\newtheorem{lemma}[theorem]{Lemma}
\newtheorem{proposition}[theorem]{Proposition}
\theoremstyle{definition}
\newtheorem{remark}[theorem]{Remark}
\newtheorem{assumption}[theorem]{Assumption} 
\begin{document}

\begin{abstract}    
In this article we prove that the set of flat singular points of locally highest density of area-minimizing integral currents of dimension $m$ and general codimension in a smooth Riemannian manifold $\Sigma$ has locally finite $(m-2)$-dimensional Hausdorff measure. In fact, the set of such flat singular points can be split into a union of two sets, one of which we show is locally $\mathcal{H}^{m-2}$-negligible, while for the other we obtain local $(m-2)$-dimensional Minkowski content bounds.
\end{abstract}

\maketitle


\setlength{\cftbeforesecskip}{4pt}
\tableofcontents

\section{Introduction}
The problem of determining the size and structure of the singular set of area-minimizing integral currents of dimension $m$ and general codimension in a smooth Riemannian manifold $\Sigma$ is one of the most relevant questions in the regularity theory of generalized surfaces which are solutions to the Plateau problem. Thanks to the fundamental works of Almgren and De Lellis \& Spadaro, \textit{cfr.} \cite{Almgren83, Almgren2000, dls1, dls2, dls3}, followed by, more recently, the ones of Krummel \& Wickramasekera and the second author with De Lellis \& Minter, see \cite{KW1, KW2, KW3, dlsk1, dlsk2, dlmsk}, it is now known that the interior singular set of such area-minimizing integral currents is $(m-2)$-rectifiable (we refer to Section \ref{ss:literature} for a more detailed discussion of the existing literature of this problem). However, this leaves open the question of whether one can obtain local quantitative bounds on the size of the singular set or, more modestly, of some parts of it; in this article we make progress in this direction, showing that a specific meaningful subset of the whole singular set, that is the set of \emph{flat singular points of locally highest density}, has locally finite $(m-2)$-dimensional Hausdorff measure.

Since our statements are local and invariant under dilations and translations, our underlying assumption throughout this paper is the following. We refer the reader to \cite{Federerbook, Simon_GMT} for the theory of integral currents. Most of our notation is adopted from the articles \cite{dls1,dls2,dls3}; we refer the reader therein for any notation or terminology that is not defined here.

\setcounter{theorem}{-1}

\begin{assumption}\label{a:main} Let $\kappa_0\in (0,1]$, $m,l \in \mathbb{N}_{\geq 1}$ and $n\geq \overline n \geq 2$ be integers. Let $\Sigma$ be an $(m + \overline{n})$-dimensional embedded complete submanifold of $\mathbb R^{m+n} = \mathbb R^{m+\overline n + l}$ of class\footnote{In light of the recent work \cite{NardulliResende}, it is possible that the requirement of $C^{3,\kappa_0}$ regularity of $\Sigma$ might be weakened to merely $C^{2,\kappa_0}$, but we do not pursue this here.} $C^{3,\kappa_0}$, and let $T$ be an $m$-dimensional integral current in $\Sigma\cap \Bbf_{7\sqrt{m}}$ with $\partial T\mres \Bbf_{7\sqrt{m}} = 0$. We assume $T$ is area-minimizing in $\Sigma\cap \Bbf_{7\sqrt{m}}$.\footnote{That is: $\spt(T) \subset \Sigma\cap \Bbf_{7\sqrt{m}}$ and $\mathbb{M}(T + \partial S) \geq \mathbb{M}(T)$ for every $(m+1)$-dimensional integral current $S$ with $\spt(S) \subset \Sigma\cap \Bbf_{7\sqrt{m}}$.} Moreover, for every $p \in \Sigma\cap\Bbf_{7\sqrt{m}}$ we assume that $\Sigma \cap \Bbf_{7\sqrt{m}}$ is the graph of a $C^{3,\kappa_0}$ function $\Psi_p : \Trm_p\Sigma \cap \Bbf_{7\sqrt{m}} \to \Trm_p\Sigma^\perp$. We denote $${\textit{\text{\textbf{c}}}}(\Sigma) :=\sup_{p \in \Sigma \cap \Bbf_{7\sqrt{m}}}\|D\Psi_p\|_{C^{2,\kappa_0}}$$ and we assume ${\textit{\text{\textbf{c}}}}(\Sigma)\leq \overline{\eps}\leq 1,$
    where $\overline\eps$ is a small positive constant whose choice will be specified in each statement.
\end{assumption}

For $T$ and $\Sigma$ as in Assumption \ref{a:main}, we recall that a point $p\in \spt(T)$ is called a \emph{regular point} if there is a positive radius $r>0$, a $C^{3,\kappa_0}$-regular embedded $m$-dimensional oriented submanifold $\mathcal{M} \subset \Sigma$ and a positive integer $Q$ such that $T \mres \Bbf_r(p)=Q \llbracket \mathcal{M} \rrbracket$. The set of regular points of $T$, which is relatively open in $\operatorname{supp}(T)$, is denoted by $\Reg(T)$. Its complement, \emph{i.e.} $\spt(T) \setminus \Reg(T)$, is denoted by $\Sing(T)$ and is called the \emph{singular set} of $T$. For $Q \in \mathbb{N}$, we denote by $\mathrm{D}_Q(T)$ the points of density $Q$ of the current $T$, and set $\text{Sing}_Q(T):=\text{Sing}(T) \cap \mathrm{D}_Q(T).$

For any $r>0$ and $p \in \mathbb{R}^{m+n}, \iota_{p, r}$ : $\mathbb{R}^{m+n} \rightarrow \mathbb{R}^{m+n}$ is the map $y \mapsto \frac{y-p}{r}$ and we denote $T_{p, r}:=\left(\iota_{p, r}\right)_{\sharp} T$, \emph{i.e.} the pushforward of $T$ by the map $\iota_{p, r}$. We also denote by $\Sigma_{p,r}$ the rescaled ambient manifold $\iota_{p,r}(\Sigma)$. The classical monotonicity formula of mass ratios (see \cite[Theorem 17.6]{Simon_GMT} and \cite[Lemma A.1]{dls1}) implies that, for every $r_k \downarrow 0$ and $p \in \spt(T)$, there is a subsequence (not relabelled) for which $T_{p, r_k}$ converges to an integral cycle $S$ which is a cone (\emph{i.e.}, $S_{0, r}=S$ for all $r>0$ and $\partial S=0$) and which is area-minimizing in $\mathbb{R}^{m+n}$. Such a cone is referred to as a \emph{a tangent cone} to $T$ at $p$. 

Recall that a tangent cone supported in an $m$-dimensional plane is called \emph{flat}, and a point $p \in \Sing(T)$ with at least one flat tangent cone is called \emph{a flat singular point}. We denote by $\Ffrak(T)$ the set of flat singular points. We remark that from the constancy theorem, see \cite[Theorem 26.27]{Simon_GMT}, a flat tangent cone at a singular point $q$ must be an oriented $m$-dimensional plane with positive integer density $\Theta(T,p)$; we remark that by Allard's Regularity Theorem, see \cite{AAvarifolds}, $\Theta(T,q)>1$. Hence, we can write the following subdivision $$\Ffrak(T)= \bigcup_{Q=2}^\infty\Ffrak_Q(T),$$
where $\Ffrak_Q(T):=\{p \in\Ffrak(T) : \Theta(T,p)= Q\}$ is the set of \emph{flat singular points of density} $Q$. The typical examples of flat singular points are branching singularities of area-minimizers induced by complex subvarieties of $\C^n$; note moreover that the uniqueness of tangent cones is still unknown at flat singular points, even under the stronger assumption that all tangent cones at the considered point are flat. Following the notation adopted in \cite{dlsk1, dlsk2, dlmsk}, we introduce a further parameter which is a real number belonging to $[1,\infty)$, which is called the \emph{singularity degree of T at p} and that we denote $\Irm(T,p)$; see Section \ref{ss:sing-deg}. For a fixed $z \in [1,\infty)$, we will denote by $\Ffrak_{Q,\geqslant z}(T)$ the set of \emph{flat singular points of T with density Q and singularity degree} $\geq z$, that is $$\Ffrak_{Q,\geqslant z}(T):=\Ffrak_Q(T)\cap \{\Irm(T,p)\geq z\}.$$

We define the set $\Ffrak_{Q,\leqslant z}(T)$ analogously. By translating and rescaling, we may work with the following assumption throughout.

\begin{assumption}\label{a:mult-Q}
    Suppose that $T$, $\Sigma$ are as in Assumption \ref{a:main}. Moreover, suppose that $0$ is a flat singular point of $T$ and $Q\in \mathbb{N}\setminus\{0,1\}$ is the density of $T$ at $0$. Moreover, assume that there exists an $m$-dimensional plane $\pi_0\in T_0\Sigma$ such that $(\mathbf{p}_{\pi_0})_\sharp T \mres \Bbf_{6\sqrt{m}} = Q\llbracket B_{6\sqrt{m}}(0,\pi_0) \rrbracket$, where $\mathbf{p}_{\pi_0}$ is the orthogonal projection onto the plane $\pi_0$.
\end{assumption}

The aim of this article is to investigate the fine structural properties of $\Ffrak_Q(T)$ and, more formally, to prove the following theorem, which provides a Hausdorff measure bound on the flat singular points that have the highest density locally.

\begin{theorem}\label{t:content}
    Suppose that $T$ and $\Sigma$ satisfy Assumption \ref{a:mult-Q}. Then $\Ffrak_Q(T)$ has finite $(m-2)$-dimensional Hausdorff measure, namely
    \begin{equation}\label{e:content}
        \mathcal{H}^{m-2}(\Ffrak_{Q}(T))<\infty.
    \end{equation}
\end{theorem}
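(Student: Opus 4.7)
The plan is to decompose $\Ffrak_Q(T) = \Ffrak_{Q,\leqslant z}(T) \cup \Ffrak_{Q,\geqslant z}(T)$ at a suitably chosen threshold $z\in(1,\infty)$ for the singularity degree $\Irm(T,\sbullet)$, and to treat the two pieces by entirely different mechanisms: a local $(m-2)$-dimensional Minkowski content bound on the low-singularity-degree piece, and $\mathcal{H}^{m-2}$-negligibility on the high-singularity-degree piece. Combining the two estimates immediately yields \eqref{e:content}.

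For the low-singularity set $\Ffrak_{Q,\leqslant z}(T)$, the plan is to run a quantitative stratification and covering argument in the spirit of \person{Naber}--\person{Valtorta}, driven by an Almgren-type frequency function built from the center-manifold Lipschitz approximation of \person{De Lellis}--\person{Spadaro}. A uniform upper bound on $\Irm(T,\sbullet)$ translates, at every base point in $\Ffrak_{Q,\leqslant z}(T)$ and every scale, into a quantitative frequency pinching: whenever the frequency on an intermediate ball is sufficiently close to the limit at the center, the $Q$-valued approximation is $L^2$-close to a homogeneous Dir-minimizer and, by maximality of the density $Q$ locally, to one carrying an $(m-2)$-plane of symmetries. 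Feeding the resulting $L^2$-best-plane estimate into the discrete Reifenberg theorem of \person{Naber}--\person{Valtorta}, together with a corona-type decomposition of the bad scales at which quantitative symmetry fails, delivers a covering of $\Ffrak_{Q,\leqslant z}(T)\cap K$ by balls of summable $(m-2)$-content for every compact $K \subset \Bbf_{7\sqrt{m}}$.

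For the high-singularity set $\Ffrak_{Q,\geqslant z}(T)$, the plan is to invoke the fine blow-up analysis of \person{Krummel}--\person{Wickramasekera} and of \person{De Lellis}--\person{Minter}--\person{Skorobogatova}. At points where $\Irm(T,\sbullet)\geq z$, an appropriately normalized blow-up procedure produces a homogeneous Dir-minimizing $Q$-valued function whose invariance subspace is forced to be strictly larger than $(m-2)$-dimensional. The strict monotonicity of the singularity degree along blow-up sequences, combined with an Almgren--Federer dimension-reduction scheme, then allows such points to be covered by countably many sets of Hausdorff dimension strictly smaller than $m-2$, hence to be $\mathcal{H}^{m-2}$-negligible, provided $z$ is chosen above a spectral threshold depending only on $m$ and $Q$.

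The main obstacle will be the joint calibration of $z$ with the parameters of the center-manifold hierarchy. The frequency-pinching constant on $\Ffrak_{Q,\leqslant z}(T)$ degrades as $z$ grows, whereas the rigidity used on $\Ffrak_{Q,\geqslant z}(T)$ forces $z$ to lie above a specific $Q$- and $m$-dependent threshold; squeezing between these two constraints requires careful tracking of all implicit constants. In addition, to apply the discrete Reifenberg theorem one must build, uniformly at every $p \in \Ffrak_{Q,\leqslant z}(T)$ and every scale, a center manifold whose error terms are controlled compatibly with the frequency-pinching inequality, and transfer the pinching estimate from the $Q$-valued approximation back to the current itself with dimensionally consistent constants; ensuring this uniform bookkeeping across the entire set is the technical heart of the argument.
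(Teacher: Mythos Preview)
Your decomposition is the right move, but the two mechanisms are attached to the \emph{wrong} halves. In the paper the Naber--Valtorta content bound is established for the \emph{high}-degree set $\Ffrak_{Q,\geqslant 1+\delta}(T)$ (with $\delta<1/Q$), while it is the \emph{low}-degree set $\Ffrak_{Q,\leqslant 1+\delta}(T)$ that is shown to be $\Hcal^{m-2}$-null. The reason is structural: a uniform \emph{lower} bound $\Irm(T,x)\geq 1+\delta$ is what forces uniform tilt-excess decay (Proposition~\ref{p:tilt-decay}), and this in turn produces a single almost-monotone frequency valid across all scales around each such $x$ --- precisely the input the Naber--Valtorta covering needs. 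By contrast, at points with $\Irm(T,x)$ near $1$ the excess does not decay, the intervals of flattening can have $s_k/t_k$ arbitrarily small, and there is no universal monotone quantity to drive the discrete Reifenberg argument; the paper explicitly remarks that it is not known whether the Naber--Valtorta scheme can be adapted to $\Ffrak_{Q,\leqslant 1+\delta}(T)$. An upper bound on $\Irm$ gives you nothing here.

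Your proposed negligibility argument for $\Ffrak_{Q,\geqslant z}(T)$ also fails on its own terms. A large singularity degree does \emph{not} enlarge the invariance subspace of the blow-up: the spine of any nontrivial average-free $Q$-valued Dir-minimizer has dimension at most $m-2$ regardless of its homogeneity, and in fact $\Ffrak_{Q,\geqslant 1+\delta}(T)$ carries essentially all of the $\Hcal^{m-2}$-mass of $\Ffrak_Q(T)$ (every branch point of a holomorphic $Q$-sheeted curve in $\mathbb{C}^2$ has singularity degree at least $1+1/Q$). The correct negligibility statement runs the other way: when $\Irm(T,x)\in[1,1+1/Q)$, the classification of two-variable homogeneous Dir-minimizers forces the degree to equal exactly $1$, so the coarse blow-up is a superposition of distinct planes, and the conical excess decay of Theorem~\ref{t:conical-excess-decay} together with Simon's cylindrical covering argument gives $\Hcal^{m-2}(\Ffrak_{Q,\leqslant 1+\delta}(T))=0$.
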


Note that Theorem \ref{t:content} yields merely a local Hausdorff measure bound, since we have already localized $T$ in Assumption \ref{a:mult-Q}.

\begin{remark}
    An analogous argument as the one developed in this article can be easily adapted to the case of integral currents which are semicalibrated by a smooth differential form, strengthening the rectifiability result obtained in \cite{Semicalibrated} to local Hausdorff measure bounds for density-$Q$ flat singularities as well.
\end{remark}

In fact, we demonstrate following two stronger results for the two respective pieces that we subdivide our flat singularities of density $Q$ into, from which Theorem \ref{t:content} is a simple consequence (see Section \ref{s:singularitiesgeq}). The first is the following, which states that we in fact have $(m-2)$-dimensional Minkowski content bounds for all flat singularities of singularity degree strictly larger than 1:

\begin{theorem}\label{p:contentfromNV}
    Suppose that $T$ and $\Sigma$ satisfy Assumption \ref{a:mult-Q} and $\delta\in (0,1/Q)$. Then $\flatS_{Q, \geqslant 1+\delta}(T)$ has finite $(m-2)$-dimensional (upper) Minkowski content, namely there exists $r_1=r_1(m,n,Q,\delta)$ such that and $C(Q,m,n,\delta)>0$ such that
    \begin{equation}\label{e:content}
        |\Bbf_r(\flatS_{Q, \geqslant 1+\delta}(T))| \leq C\, r^{n+2} \qquad \forall\, r\in (0,r_1]\,.
    \end{equation}
\end{theorem}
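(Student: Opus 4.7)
The plan is to derive Theorem \ref{p:contentfromNV} by combining the center manifold and frequency function machinery of \cite{dls1,dls2,dls3,dlsk1,dlsk2,dlmsk} with the quantitative stratification and covering scheme of Naber--Valtorta; the key observation is that $\Irm(T,p)\geqslant 1+\delta$, being strictly above the generic value $1$, provides the uniform quantitative frequency estimates needed to run the standard Minkowski content machinery.

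First, I would use the definition of the singularity degree to establish a uniform lower bound on the Almgren frequency $I_{N_p}(r)$ of the normal approximation $N_p$ built over a center manifold $\mathcal{M}_p$ at each $p \in \flatS_{Q,\geqslant 1+\delta}(T)$, valid for every sufficiently small scale $r$. Combined with the monotonicity of $I_{N_p}$ and uniform upper bounds coming from the finite frequency at the initial scale, this would yield a \emph{quantitative frequency pinching}: aside from a controlled number of ``bad'' scales on which the frequency drops by a definite amount, $N_p$ is forced on the remaining scales to be quantitatively close to an $\alpha$-homogeneous Dir-minimizing $Q$-valued map with $\alpha \geq 1+\delta$.

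Second, I would extract from the pinching a quantitative $(m-1)$-non-symmetry of the normal approximation $N_p$ at each relevant scale. Indeed, if on some ball $B_r(p)$ the function $N_p$ were quantitatively close to an $(m-1)$-symmetric homogeneous Dir-minimizer, then by the homogeneity degree being strictly larger than $1$ and the density-$Q$ constraint on the center manifold, the only possibility would be a planar (hence trivial) configuration, contradicting $\alpha \geq 1+\delta > 1$. This would yield a uniform $\beta$-type lower bound $\beta^{m-1}_{N_p}(p,r) \geq \beta_0(Q,m,n,\delta) > 0$ on every relevant scale. With frequency monotonicity, frequency pinching, and quantitative $(m-1)$-non-symmetry in hand, the abstract covering machinery of Naber--Valtorta (in the form adapted to area-minimizing currents in \cite{dlmsk}) would produce, for every $r \in (0,r_1]$, a cover of $\flatS_{Q,\geqslant 1+\delta}(T) \cap \Bbf_1$ by at most $C r^{-(m-2)}$ balls of radius $r$, which directly yields the Minkowski content bound \eqref{e:content}.

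I expect the main obstacle to be the compatibility of the frequency lower bound and the $(m-1)$-non-symmetry estimate across different center manifolds: the center manifold $\mathcal{M}_p$ and the normal approximation $N_p$ depend on both the basepoint $p$ and the reference scale, and one must carefully control how the frequency and the $\beta$-numbers behave as these parameters vary along the Almgren tower of center manifolds. Ensuring that the quantitative $(m-1)$-non-symmetry persists uniformly under such changes of reference frame, and that the $\beta$-number estimates can be genuinely summed across scales into the Reifenberg-type bound required by the Naber--Valtorta scheme, is the most delicate part of the argument.
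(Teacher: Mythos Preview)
Your overall strategy---frequency pinching for the normal approximations, quantitative cone-splitting, and the Naber--Valtorta covering scheme---is the same route the paper takes, following \cite{dlsk2}. However, there is a genuine gap in your first step that is precisely the new content of the paper.

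You write that you would ``use the definition of the singularity degree to establish a uniform lower bound on the Almgren frequency $I_{N_p}(r)$ \ldots\ valid for every sufficiently small scale $r$.'' The singularity degree only controls the \emph{limit} $I_{N_p}(0)$; to get a frequency lower bound at all scales $r\leq r_0$ you need the almost-monotonicity of the frequency together with a threshold $r_0$ below which the center-manifold/normal-approximation machinery is available. The problem is that in \cite{dlsk1,dlsk2} this threshold depends on the point $p$ (and on the current), because it is tied to the first interval of flattening at which the tilt-excess decay kicks in. With a $p$-dependent $r_0$ the Naber--Valtorta machinery only yields content bounds on each piece of a countable decomposition of $\flatS_{Q,\geqslant 1+\delta}(T)$ by the value of $r_0(p)$, and since these pieces are not closed one cannot pass from piecewise content bounds to a content bound on the whole set---this is exactly why \cite{dlsk2} stops at rectifiability. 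The paper resolves this via Proposition~\ref{p:tilt-decay}, a sharpened tilt-excess decay whose starting radius $r_0$ depends only on the lower bound $I_0=1+\delta$ and on $m,n,Q$; this is proved by a diagonal compactness over varying currents and centers. With the uniform $r_0$ in hand, the paper replaces the original intervals of flattening by adapted intervals along a fixed geometric sequence $\gamma^j$ (as in \cite{Semicalibrated}), so that a single compatible tower of center manifolds and normal approximations serves all points of $\flatS_{Q,\geqslant 1+\delta}(T)$ simultaneously, and the Naber--Valtorta argument from \cite{dlsk2} then runs on the whole set at once.

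In short, the obstacle you flag in your last paragraph is real, and the ``definition of the singularity degree'' is not enough to overcome it; the missing ingredient is the uniform-in-$p$ excess decay of Proposition~\ref{p:tilt-decay}.
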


The second result states that the flat singularities of degree sufficiently close to 1 are $\Hcal^{m-2}$-negligible:

\begin{theorem}\label{p:nullset}
    Suppose that $T$ and $\Sigma$ satisfy Assumption \ref{a:mult-Q} and $\delta\in (0,1/Q)$. Then $$\mathcal{H}^{m-2}\left(\mathfrak{F}_{Q,\leqslant 1+\delta}\right)=0.$$
\end{theorem}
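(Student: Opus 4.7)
The plan is to argue by contradiction, using the Almgren frequency blow-up together with the frequency gap for zero-average $Q$-valued Dir-minimizers. We in fact aim to show the stronger statement that $\Ffrak_{Q,\leqslant 1+\delta}(T) = \emptyset$ under the assumption $\delta < 1/Q$.

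Fix $p \in \Ffrak_{Q,\leqslant 1+\delta}(T)$ and translate so that $p = 0$. From the center manifold $\mathcal{M}$ and normal approximation $N : \mathcal{M} \to \mathcal{A}_Q(\mathcal{M}^\perp)$ at $0$ as developed in \cite{dls1,dls2,dls3}, the singularity degree $\alpha := \Irm(T,0) = \lim_{r \to 0} I_N(0,r)$ is well-defined by Almgren's monotonicity formula and satisfies $\alpha \leq 1+\delta$ by assumption. Extract an Almgren frequency blow-up along a sequence $r_k \downarrow 0$ to obtain a non-trivial $\alpha$-homogeneous Dir-minimizing $Q$-valued function $\hat{N} : \pi_0 \to \mathcal{A}_Q(\pi_0^\perp)$ with zero average $\eta \circ \hat{N} \equiv 0$ and the normalization $\|\hat{N}\|_{L^2(\partial B_1)} = 1$.

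For $Q$-valued Dir-minimizers with zero average, a classical frequency gap, stemming from the classification of $\alpha$-homogeneous multi-valued harmonic functions into irreducible $q$-valued pieces with $q \mid Q$ and admissible frequencies of the form $k/q$ with $\gcd(k,q) = 1$, yields the dichotomy $\alpha = 1$ or $\alpha \geq 1 + 1/Q$. Combined with $\alpha \leq 1+\delta < 1 + 1/Q$, this forces $\alpha = 1$; the classification of $1$-homogeneous $Q$-valued Dir-minimizers with zero average then gives $\hat{N} = \sum_{i=1}^Q \llbracket L_i \rrbracket$ for linear functions $L_i : \pi_0 \to \pi_0^\perp$ satisfying $\sum_i L_i = 0$ and not all $L_i$ identically zero.

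The contradiction is obtained by matching the blow-up of the normal approximation with the tangent cone of the current. By the quantitative Almgren coupling between $T$ and the multi-graph of $N$ on $\mathcal{M}$ at small scales, the rescaled currents $T_{0,r_k}$ converge, up to a subsequence, to the cone $\sum_i \llbracket \mathrm{graph}(L_i) \rrbracket$, a superposition of $Q$ distinct $m$-planes through the origin in $\mathbb{R}^{m+n}$ whenever some $L_i \not\equiv 0$. However, $0 \in \Ffrak_Q(T)$ is a flat singular point, so there is a tangent cone of $T$ at $0$ of the form $Q \llbracket \pi_0 \rrbracket$; the equality $\sum_i \llbracket \mathrm{graph}(L_i) \rrbracket = Q \llbracket \pi_0 \rrbracket$ combined with $\sum_i L_i = 0$ forces all $L_i \equiv 0$, contradicting the non-triviality $\|\hat{N}\|_{L^2(\partial B_1)} = 1$. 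The main obstacle will be this final step, namely showing rigorously that a $1$-homogeneous frequency blow-up of the normal approximation $N$ forces the tangent cone of $T$ at $p$ to be non-flat: tangent cones at singular points of area-minimizing currents in higher codimension need not be unique in general, and the argument requires coupling the well-definedness of $\alpha$ via monotonicity with the quantitative $L^2$-closeness between $T$ and the multi-graph of $N$ on $\mathcal{M}$, in order to match the Euclidean rescaling of $T$ with the frequency-preserving rescaling of $N$ in a consistent way.
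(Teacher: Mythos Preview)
Your argument has two genuine gaps, and the stronger statement you aim for ($\Ffrak_{Q,\leqslant 1+\delta}(T)=\emptyset$) is in fact not known to be true; the paper explicitly remarks that whether the singularity degree is $\geq 1+1/Q$ at \emph{every} flat singular $Q$-point remains open.

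First, the frequency gap you invoke --- ``$\alpha=1$ or $\alpha\geq 1+1/Q$'' --- is a consequence of the classification \cite[Proposition~5.1]{dlsmams} of homogeneous $Q$-valued Dir-minimizers on $\R^2$, and it is only valid there. For $m\geq 3$ there is no such classification of homogeneous $\Acal_Q$-valued Dir-minimizers on $\R^m$, and no reason to expect a gap above $1$. The paper uses the $2$-dimensional gap only after first establishing that the relevant blow-up is translation-invariant along an $(m-2)$-dimensional subspace (so that it reduces to a $2$-variable map); this $(m-2)$-invariance is not automatic and is precisely what the dichotomy in Lemma~\ref{l:dimension-drop} is designed to detect. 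When the invariance subspace has dimension $\leq m-3$, one does not obtain a frequency constraint at all; one instead lands in alternative~(b) of the dichotomy.

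Second, even granting $\alpha=1$ and $\hat N=\sum_i\llbracket L_i\rrbracket$, your contradiction fails: the fine blow-up is obtained by normalizing $N$ by its $L^2$ norm, which tends to zero, so $\hat N\not\equiv 0$ is perfectly compatible with $T_{0,r_k}\toweakstar Q\llbracket\pi_0\rrbracket$. The rescaled currents $T_{0,r_k}$ do \emph{not} converge to $\sum_i\llbracket\mathrm{graph}\,L_i\rrbracket$; rather, $T_{0,r_k}$ is close to the graph of $\mathbf{h}_k u_k$ with $\mathbf{h}_k\to 0$, which collapses onto $\pi_0$. What a $1$-homogeneous fine blow-up actually buys (and what the paper uses) is that at such scales the \emph{conical excess} of $T$ relative to the induced cone $\Sbf=\bigcup_i\mathrm{graph}\,L_i$ is much smaller than the planar excess --- alternative~(a) of Lemma~\ref{l:dimension-drop} --- which feeds into the conical excess decay Theorem~\ref{t:conical-excess-decay} and a Simon-type covering argument to conclude $\Hcal^{m-2}$-negligibility, not emptiness.
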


It may be also possible to prove that the entirety of $\Ffrak_Q(T)$ has locally finite $(m-2)$-dimensional Minkowski content, namely that \eqref{e:content} holds for $\flatS_Q(T)$ in place of $\flatS_{Q, \geqslant 1+\delta}(T)$. This would require one to show that for some $\delta=\delta(Q, m, n)<1/Q$ the set $\mathfrak{F}_{Q,\leqslant 1+\delta}(T)$ itself has locally finite Minkowski content, and not just $\mathcal{H}^{m-2}$-null measure. However, we believe that the covering argument of \cite{Simoncylindrical}, which is used to demonstrate that this set is $\Hcal^{m-2}$-negligible, is not well-adapted to strengthen this conclusion to such a content bound. The main obstruction seems to be the regions where there are "holes" in the set (in the sense of \eqref{e:no-gaps} failing), which do not appear to be compatible with a Minkowski-type estimate. On the other hand, the covering argument used in \cite{Nabervaltorta} allows one to establish such a content bound due to the fact that the analogue of regions with holes therein is instead formulated in the sense of closeness to a subspace of dimension strictly less than $m-2$, therefore being better suited for $(m-2)$-dimensional content bounds, despite not providing uniqueness of tangents to the surface almost-everywhere. Indeed, we are able to establish content bounds in Theorem \ref{p:contentfromNV} precisely because this subset of $\flatS_Q(T)$ is the one for which we are able to exploit the methods of \cite{Nabervaltorta}. We do not know if the latter methods can be used for the set $\flatS_{Q, \leqslant 1+\delta}(T)$, since there is no single appropriate monotone quantity at all scales around these points.

\begin{remark}\label{r:content}
    Note that we are only able to obtain local Hausdorff measure bounds on the flat singular points that are locally of top density. The main obstructions to obtaining such bounds on the whole singular set arise from possible accumulations of lower density singularities to higher density ones, and accumulations of singularities of lower strata to higher strata (see e.g. \cite{whitestratification} for Almgren's stratification). To the knowledge of the authors, there are no explicit examples of such phenomena, but ruling out this possibility is a very delicate and difficult problem, which remains widely open.
\end{remark}

\subsection{Previous literature}\label{ss:literature}

The regularity theory for area-minimizing integral current in codimension 1 was achieved by the joint effort of several deep contributions from the late sixties, see \cite{Degiorgifrontiere, Degiorgibernstein, Flemingcod1, Almgrencod1, Simonscod1, BDGcodim1, Federercod1}, until the work of Simon in which he proved the $(m-7)$-rectifiability of the (interior) singular set, see \cite{Simon_rect}. It took then more than twenty years to settle the question about whether or not the singular set had locally finite $(m-7)$-dimensional Hausdorff measure: introducing a set of new deep ideas, in \cite{Nabervaltorta} Naber \& Valtorta proved, as a corollary of their theory, that the singular set area-minimizing integral current in codimension 1 has in fact locally finite Minkowski content.

The study of area-minimizing integral currents in codimension at least 2 differs drastically, mainly due to the present of \emph{flat singularities}. In his monumental work about the regularity of area-minimizing integer rectifiable currents in general codimension, Almgren proved that they are in fact supported on smooth submanifolds apart from a relatively closed (interior) singular set of Hausdorff codimension at least 2, see \cite{Almgren83, Almgren2000}. The profound ideas and the techniques developed by Almgren have been fully understood only recently, thanks to the work of De Lellis and Spadaro who simplified, clarified and improved Almgren's regularity theory, see \cite{dlsmams, dls1, dlssns, dls2, dls3}. A strengthening of the $(m-2)$-dimensional Hausdorff dimension bound has been then proved by the second author, showing that the upper Minkowski dimension of the singular set is at most $m-2$, see \cite{skoro}. Only recently, De Lellis, Minter \& the second author and, independently, Krummel \& Wickramasekera have been able to prove that the singular set of a general codimension area-minimizing current is $(m-2)$-rectifiable, and that the tangent cone is a unique superposition of planes in $\mathcal{H}^{m-2}$-a.e. points in the support of the current, see \cite{dlsk1, dlsk2, dlmsk, KW1, KW2, KW3}. It is still unknown whether the singular set of an area-minimizing integral current always has locally finite $\mathcal{H}^{m-2}$ measure (\textit{cfr.} Remark \ref{r:content}); this is the case for two dimensional currents, as shown by \cite{Chang, DLSS2, DLSS1, DLSS3}. The best available result without any dimensional restriction comes as a corollary of the construction by Krummel and Wickramasekera in \cite{KW1, KW2, KW3}. In addition to the aforementioned results, the authors achieve the further conclusion that (their analogous set for) the set of flat singular points $\Ffrak_{Q, \geqslant 1+\delta}(T)$ can be decomposed, in a sufficiently small neighborhood $U$ of a point of density $Q$, into the union of finitely many sets $F_1 \cup \ldots \cup F_N$, each of which has locally finite $\mathcal{H}^{m-2}$ measure; we remark that this decomposition does not yield the local finiteness of the measure of the whole set of flat singular points in $U$, since the sets $F_i$ are not closed \emph{a priori}.

\subsection{Overview of the proof}
Our proof is divided into two main parts: in the first one we obtain a quantitative version of the arguments in \cite{dlmsk, dlsk1}, concluding that the set $\mathfrak{F}_{Q,\leqslant 1+\delta}(T)$ is $\mathcal{H}^{m-2}$-null (in place of simply $\Ffrak_{Q, 1}(T)$). To this aim, we first obtain a uniform version of \cite[Proposition 7.2] {dlsk1} where the radius threshold $r_0$ for the decay is independent of $T$ and the center $x$, \textit{cfr.} Proposition \ref{p:tilt-decay}. Then, to conclude the $\mathcal{H}^{m-2}$-negligibility of $\mathfrak{F}_{Q,\leqslant 1+\delta}(T)$ for $\delta>0$ arbitrarily close to $1/Q$, we develop suitable modifications of the arguments in the final part of \cite{dlmsk}, exploiting the idea that, at points in $\mathfrak{F}_{Q,\leqslant 1+\delta}(T)$, all the coarse blow-ups are homogeneous with degree $d \in[1,1+\delta]$ and thus, for a sufficiently small $\delta$, close to $1$-homogeneous Dir-minimizers; in particular, we improve \cite[Lemma 14.1]{dlmsk} by proving a quantitative version of it, \textit{cfr.} Lemma \ref{l:dimension-drop}. This will allow us to apply the \emph{conical excess decay theorem}, see Theorem \ref{t:conical-excess-decay} and \cite[Theorem 2.5]{dlmsk}, to rescaled and translated currents $T_{q,r}$ with $q\in \mathfrak{F}_{Q,\leqslant 1+\delta}(T)$ and $r>0$ sufficiently small, hence achieving that $\flatS_{Q,\leqslant 1+\delta} (T)$ is an $\mathcal{H}^{m-2}$-null set. We further remark that as a consequence of Lemma \ref{l:dimension-drop} we are in fact able to prove that at $\Hcal^{m-2}$-a.e. flat singular point of density $Q$, the singularity degree is at least $1+1/Q$.

In the second part we show that $\Ffrak_{Q, \geqslant 1+\delta}(T)$ has locally finite Minkowski content bounds, relying on a new construction with respect to the arguments in \cite{dls3} and \cite{dlsk2} to be able to tackle low frequency and high frequency points together. This is achieved by means of a suitable \emph{$(1+\delta)$-stopping and restarting procedure}: we improve the construction of the intervals of flattening, with the key difference with respect to \cite{dls3, dlsk2} that we do not decompose $\Ffrak_{Q, \geqslant 1+ 1/Q}(T)$ into countably many subsets; instead, we work with the entirety of this set as a single piece: this allows us to avoid concentrations between each piece. This argument, whose motivation naturally arises with the goal of obtaining Minkowski content bounds for the set $\Ffrak_{Q, \geqslant 1+ 1/Q}(T)$, has been already used, only for convenience, in \cite{Semicalibrated}; we remark that in order to show $m-2$-rectifiability and $\mathcal{H}^{m-2}$-uniqueness of tangent cones for semicalibrated integral currents there is no need of using this construction since the one already present in \cite{dlsk2} would have been sufficient, \textit{cfr.} \cite[page 4, lines 22-27]{dlsk2} and \cite[page 4, line 23]{Semicalibrated}. 

The first and the second parts together allow to conclude that the flat singular points with highest density have locally finite $\mathcal{H}^{m-2}$ measure, proving Theorem \ref{t:content}. 


The paper is organized as follows. In Section \ref{s:prelim} we recall the main tools of \cite{dls3, dlsk1} and set up the notation. In Section \ref{s:tiltexcessdecay} we prove the sharpened version of the tilt-excess decay; in Section \ref{s:singularitydegree1plusdelta} we prove that $\flatS_{Q,\leqslant 1+\delta} (T)$ is an $\mathcal{H}^{m-2}$-null set, a byproduct of which is the fact that we are able to treat only flat $Q$-points of singularity degree at least $1+\delta$ in the succeeding section. Finally, in Section \ref{s:singularitiesgeq} we obtain Minkowski content bounds for flat singular points with density $Q$ and singularity degree bigger than $1 + \delta$, allowing us to conclude the proof of Theorem \ref{t:content}.

\subsection*{Acknowledgments}
The authors would like to thank Camillo De Lellis for suggesting the problem and for making this collaboration possible. We are also grateful to the Institute for Advanced Study in Princeton and The Institute for Theoretical Studies in Z\"urich for warm hospitality. The research of G.C. has been supported by the Associazione Amici di Claudio Dematté; A.S. is grateful for the generous support of Dr. Max R\"ossler, the Walter Haefner Foundation and the ETH Z\"urich Foundation.

\section{Setup and preliminary results}\label{s:prelim}

Our article builds on techniques developed in \cite{dls3, dlsk1}: this section aims at recalling the main tools and setting up some of the key notation.

For suitable rescalings of the current around a given flat singular point, we exploit the center manifold construction, see \cite{dls2}, which provides a good "regularized" approximation of the average of the sheets of the current at some given scale, and in turn provides a suitable graphical approximation of the current parameterized over the center manifold. However, a center manifold and corresponding graphical approximation constructed at a certain scale relative to the given center may no longer be suitable at smaller scales. Thus, around $x\in \flatS_Q(T)$ we need to introduce a \emph{stopping condition} for the center manifolds and a countable collection of disjoint intervals of radii $(s_j(x),t_j(x)] \subset (0,t_0]$, for $j\in \N$ and $t_0$ sufficiently small, referred to as \emph{intervals of flattening}, such that for $\eps_3 > 0$ fixed as in \cite[Assumption 2]{dls3} we have
\[
    \Ebf(T, \Bbf_{6\sqrt{m}r}(x)) \leq \eps_3^2, \qquad \Ebf(T, \Bbf_{6\sqrt{m}r}(x)) \leq C {\textit{\text{\textbf{m}}}}^{(j)}_{0}\, r^{2-2\delta_2} \qquad \forall r\in \left(\tfrac{s_j}{t_j},3\right],
\]
where
\begin{equation}\label{e:m_0}
    {\textit{\text{\textbf{m}}}}^{(j)}_{0}:=\max\{\Ebf(T,\Bbf_{6\sqrt{m}t_j}(x)), \overline\eps^2 t_j^{2-2\delta_2}\}
\end{equation}
with $\delta_2$ is fixed as in \cite[Assumption 1.8]{dls2} so that all theorems and preposition therein can be applied. We recall that $\Ebf(T, \Bbf_\rho(x))$ is the \emph{tilt excess} of $T$ in the ball $\Bbf_{\rho}(x)$, defined by
\[
    \Ebf(T, \Bbf_{\rho}(x)) = \Ebf(T_{x,\rho}, \Bbf_1) := \frac{1}{2\omega_m \rho^m}\inf_{\text{$m$-planes $\pi$}} \int_{\Bbf_{\rho}(x)} |\vec{T} - \vec{\pi}|^2 \, d\|T\|\,,
\]
where $\omega_m$ is the $m$-dimensional Hausdorff measure of the unit ball $B_1$ in any $m$-dimensional plane. The tilt excess in a cylinder $\Cbf_\rho(x,\varpi)$ of radius $\rho$ oriented by a plane $\varpi$ and centered around $x$ is defined analogously, see for example \cite[Definition 1.1]{dls2}. Therefore, from now on we will work under the following assumption, allowing us to iteratively produce the above sequence of intervals.

\begin{assumption}\label{a:main-2}
    $T$ and $\Sigma$ are as in Assumption \ref{a:main}. The parameter $\overline \eps$ is chosen small enough to ensure that ${\textit{\text{\textbf{m}}}}_{0,0} \leq \eps_3^2$.
\end{assumption}

We refer to a sequence of positive numbers $r_k$ with $r_k \downarrow 0$ as a \emph{blow-up sequence of radii} around a flat singular point $x \in \flatS_Q(T)$ if $T_{x,r_k}$ converges to a flat tangent cone $Q\llbracket \pi \rrbracket$ for some $m$-dimensional plane $\pi\subset T_x\Sigma$. If $x=0$, we omit any reference to the center. Note that, having fixed a blow-up sequence $(r_k)$, for every $k$ sufficiently large there is a unique index $j(k)$ for an interval of flattening around $x$ such that $r_k \in ]s_{j(k)}, t_{j(k)}]$. By composition with a small rotation of coordinates, we may assume that the $m$-dimensional planes $\pi_k$ over which we parameterize the center manifolds $\Mcal_{j(k)}$ are identically equal to the same fixed plane $\pi_0 \equiv \R^m\times \{0\} \subset T_0\Sigma$. We use the shorthand notation
    \[
        {\textit{\text{\textbf{m}}}}_{x, \,k} := {\textit{\text{\textbf{m}}}}^{(j(k))}_{x} = \max\{\Ebf(T_{x,t_{j(k)}}, \Bbf_{6\sqrt{m}}), \overline\eps^2 t_{j(k)}^{2-2\delta_2}\}.
    \]

\subsection{Compactness procedure}\label{ss:compactness} Let $T$ satisfy Assumption \ref{a:main-2}, let $x\in \flatS_Q(T)$, let $r_k\in (s_{j(k)}(x), t_{j(k)}(x)]$ be a blow-up sequence around $x$ and let $\frac{\overline{s}_k}{t_{j(k)}} \in \big]\frac{3r_k}{2 t_{j(k)}}, \frac{3r_k}{t_{j(k)}}\big]$ be the scale at which the reverse Sobolev inequality \cite[Corollary 5.3]{dls3} holds for $r = \frac{r_k}{t_{j(k)}}$ around $x$. Then let $\overline{r}_k \coloneqq \frac{2\overline{s}_k}{3t_{j(k)}} \in \big]\frac{r_k}{t_{j(k)}}, \frac{2r_k}{t_{j(k)}}\big]$. We in turn define the corresponding rescalings of $T$, as well as those of the ambient manifold $\Sigma$ and the center manifolds:
\[
    \overline{T}_k \coloneqq  (\iota_{x,\overline{r}_k t_{j(k)}})_\sharp T\mres \Bbf_{\frac{6\sqrt{m}}{\overline{r}_k}}, \qquad \overline{\Sigma}_k \coloneqq \iota_{x,\overline{r}_k} (\Sigma_{0,t_{j(k)}}), \qquad \overline{\Mcal}_k \coloneqq \iota_{0,\overline{r}_k} (\Mcal_{x,t_{j(k)}})\, ,
\]
where $\Mcal_{x,t_{j(k)}}$ denotes the center manifold associated to $T_{x,t_{j(k)}}\mres \Bbf_{6\sqrt{m}}$ (see \cite{dls3}). We additionally denote by $N_{x,j(k)}$ the associated $\Mcal_{x,t_{j(k)}}$-normal approximation. We let $\pmb{\Phi}_k(x) := (x,\pmb{\varphi}_k(\overline r_k x))$ denote the maps parameterizing the graphs of the rescaled center manifolds $\overline{\Mcal}_k$, where $\pmb{\varphi}_k$ is the map parameterizing the center manifold $\Mcal_{x,t_{j(k)}}$ over $B_3(\pi_0)$.
Define
\[
    \overline{N}_k: \overline{\Mcal}_k \to \R^{m+n}, \qquad \overline{N}_k(p) \coloneqq \frac{1}{\overline{r}_k} N_{0,t_{j(k)}}(\overline{r}_k p),
\]
and let
\[
    u_k \coloneqq \frac{\overline{N}_k \circ \textbf{e}_k}{\mathbf{h}_k}, \qquad u_k:\pi_k \supset B_3 \to \Acal_Q(\R^{m+n}),
\]
where $\textbf{e}_k$ is the exponential map at $p_k \coloneqq \frac{\pmb{\Phi}_k(0)}{\overline{r}_k} \in \overline{\Mcal}_k$ defined on $B_3 \subset \pi_k \simeq T_{p_k} \overline{\Mcal}_k$ and $\mathbf{h}_k \coloneqq \|\overline{N}_k\|_{L^2(\Bcal_{3/2})}$. The reverse Sobolev inequality of \cite[Corollary 5.3]{dls3} gives a uniform control on the $W^{1,2}$ norm of $u_k$ on $B_{3/2} (0, \pi_k)$.

Then, following the proof of \cite[Theorem 6.2]{dls3}, there exists a subsequence (not relabelled), a limiting $m$-plane $\pi_0$ and a Dir-minimizing map $u \in W^{1,2}(B_{3/2}(0, \pi_0),\Acal_Q(\pi_0^\perp))$ with $\pmb{\eta}\circ u = 0$ and $\|u\|_{L^2(B_{3/2})} = 1$, such that (after we apply a suitable rotation to map $\pi_k$ onto $\pi$)
\begin{equation}\label{eq:compactness}
    u_k \longrightarrow u \quad \text{strongly in $W^{1,2}_\loc\cap L^2$}.
\end{equation}

\subsection{Almgren's frequency function and singularity degree}\label{ss:sing-deg}
Given a Lipschitz cut-off function $\phi: [0,\infty) \to [0,1]$ which vanishes identically for $t$ sufficiently large, equals $1$ for $t$ sufficiently small and is monotone, recall the following smoothed variant of Almgren's frequency function $I_u(x,r)$ for multi-valued Dir-minimizers, which is more convenient for our purposes than its classical counterpart.
\begin{align*}
D_{u} (x,r) &:= \int |Du (y)|^2 \phi \left(\frac{|y-x|}{r}\right)\, dy\, ,\\
H_{u} (x,r) &:= -\int \frac{|u(y)|^2}{|y-x|} \phi' \left(\frac{|y-x|}{r}\right)\, dy\, , \\
I_{u} (x,r) &:= \frac{r\, D_{u} (x,r)}{H_{u} (x,r)}\, .
\end{align*}
The same computations showing the monotonicity of Almgren's frequency function for Dir-minimizers apply to the latter smoothed variant\footnote{Note that Almgren's frequency function corresponds to the choice $\phi = {\mathbf{1}}_{[0,1]}$.}, \textit{cfr}. for instance
\cite[Section 3]{dls2}. Moreover, it can be readily checked that $I_u(x,\cdot)$ is constant in $r$ when the map is radially homogeneous about $x$, and this constant is the degree of homogeneity of the map. It follows then from the arguments in \cite[Section 3.5]{dlsmams} that the limit
\[
I_{u} (x,0) = \lim_{r\downarrow 0} I_u (x, r)
\]
is independent of the choice of $\phi$. We will henceforth fix the following convenient specific choice of $\phi$:
\begin{equation}\label{e:def_phi}
\phi (t) =
\left\{
\begin{array}{ll}
1 \qquad &\mbox{for $0\leq t \leq \frac{1}{2}$}\\
2-2t \quad &\mbox{for $\frac{1}{2}\leq t \leq 1$}\\
0 &\mbox{otherwise}\, .
\end{array}
\right.
\end{equation}
When $x=0$, we will omit the dependency on $x$ for $I$ and related quantities, and we will merely write $I_{u}(r)$.

Any map $u$ as defined by the above compactness procedure, recentered at $x\in \flatS (T)$, is called a {\em fine blow-up} limit along the sequence $r_k$ at $x$. The set
\[
    \Fcal(T,x) \coloneqq \set{ I_{u}(0)}{\text{$u$ is a fine blow-up at $x$ along some $r_k \downarrow 0$}},
\]
is the \emph{set of (singular) frequency values of $T$ at $x$}. The \emph{singularity degree} of $T$ at a flat singular point $x$ is defined as 
\[
    \Irm(T,x) \coloneqq \inf \Fcal(T,x) \, .
\]

\section{Uniform tilt excess decay}\label{s:tiltexcessdecay}

In this section we obtain a quantitative version of the compactness arguments in \cite{dlsk1}. In particular, we prove the following tilt excess decay result, which is a sharpened version of \cite[Proposition 7.2]{dlsk1}. 

\begin{proposition}[Uniform tilt excess decay]\label{p:tilt-decay}
        Let $T, \Sigma$ be as in Assumption \ref{a:main-2} and $x\in \Fcal_Q(T)$ a flat singular $Q$-point of $T$. For every $I_0>1$, there exist constants $C(m,n,Q)>0$, $\alpha(I_0,m,n,Q)\in (0,2-2\delta_2)$ and $r_0(I_0,m,n,Q)>0$ such that if $\Irm(T,x) \geq I_0$ then
         \begin{equation}\label{e:excess-decay}
            \Ebf(T,\Bbf_r(x)) \leq C\left(\frac{r}{r_0}\right)^\alpha \max \{\Ebf(T,\Bbf_{r_0}(x)), \overline\eps^2 r_0^{2-2\delta_2}\} \qquad \forall r\in (0,r_0).
        \end{equation}
\end{proposition}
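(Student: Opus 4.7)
The natural approach is contradiction/compactness, establishing a one-step decay at a definite scale and iterating. Concretely, I plan to first show that there exist $\rho \in (0, 1/2)$, $\alpha \in (0, 2-2\delta_2)$ and $r_0 > 0$, depending only on $I_0, m, n, Q$, such that under the hypotheses of the proposition
\[
    \Ebf(T, \Bbf_{\rho r}(x)) \leq \tfrac{1}{2}\rho^\alpha \max\bigl\{\Ebf(T, \Bbf_r(x)),\, \overline\eps^2 r^{2-2\delta_2}\bigr\} \qquad \forall\, r\in (0, r_0].
\]
A straightforward dyadic iteration on scales $\rho^j r_0$ then yields \eqref{e:excess-decay} with a constant depending only on $m,n,Q$; the restriction $\alpha < 2-2\delta_2$ is imposed to ensure that the two branches of the $\max$ alternate correctly through the iteration, with the $\overline\eps^2 r^{2-2\delta_2}$ branch never dominating the "excess" branch.

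To establish the one-step inequality, I would suppose it fails: we obtain $(T_k, \Sigma_k, x_k)$ with $x_k \in \flatS_Q(T_k)$, $\Irm(T_k, x_k) \geq I_0$, and $r_k\downarrow 0$ at which the bound is violated. Each $r_k$ lies in an interval of flattening $(s_{j(k)}(x_k), t_{j(k)}(x_k)]$, so the compactness procedure of Section \ref{ss:compactness} produces rescaled currents $\overline{T}_k$, center manifolds $\overline{\Mcal}_k$, normal approximations $\overline{N}_k$ and normalized multi-valued maps $u_k : B_3 \to \Acal_Q(\mathbb{R}^{m+n})$ with $\|u_k\|_{L^2(B_{3/2})} = 1$. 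After extracting a subsequence and rotating so that the $\pi_k$ align with a fixed plane $\pi_0$, \eqref{eq:compactness} gives $u_k \to u_\infty$ strongly in $W^{1,2}_\loc \cap L^2$, with $u_\infty$ a nonzero Dir-minimizer satisfying $\pmb{\eta} \circ u_\infty = 0$.

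The crucial input is the bound $I_{u_\infty}(0) \geq I_0$. This is obtained by a diagonal extraction: any blow-up of $u_\infty$ at $0$ can be realized as a limit of fine blow-ups at some $x_k$ along an appropriate sub-sequence of scales, hence its frequency at the origin is at least $\Irm(T_k, x_k) \geq I_0$. Frequency monotonicity for Dir-minimizers then promotes this to $I_{u_\infty}(0, \rho) \geq I_0$ for all $\rho \in (0, 3/2]$, and integrating the standard ODE $\tfrac{d}{d\rho}\log\bigl(H_{u_\infty}(0,\rho)/\rho^{m-1}\bigr) = 2I_{u_\infty}(0,\rho)/\rho$ yields $D_{u_\infty}(0,\rho) \leq C\rho^{m-2+2I_0}$, i.e.\ the averaged Dirichlet energy decays as $\rho^{2I_0 - 2}$. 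Choosing $\alpha \in (0, \min\{2(I_0-1),\, 2-2\delta_2\})$ and $\rho$ small so that the Dir-minimizer bound dominates $\tfrac{1}{4}\rho^\alpha$, the strong $W^{1,2}$ convergence transfers this decay to $u_k$ for $k$ large. The standard comparison between the Dirichlet energy of $\overline{N}_k$ over $\overline{\Mcal}_k$ and the tilt excess of $\overline{T}_k$ from \cite[Sections 3--5]{dls3}, together with the fact that the center manifold error terms are of sub-critical size $\BigO\bigl({\textit{\text{\textbf{m}}}}_{x_k, k}^{1+\gamma}\bigr)$ for some $\gamma > 0$ (hence negligible relative to ${\textit{\text{\textbf{m}}}}_{x_k, k}$ in the limit), then transports the decay back to $\overline T_k$ and contradicts the supposed failure at $r_k$.

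The hard part will be the frequency inheritance $I_{u_\infty}(0) \geq I_0$ in the diagonal limit, which is precisely what drives the uniformity of $r_0$. In \cite[Proposition 7.2]{dlsk1} the blow-up is taken at a single fixed $T$, so $u_\infty$ is genuinely a fine blow-up of $T$ and the frequency lower bound is immediate; here $T_k$ varies with $k$ and one must realize any internal blow-up of $u_\infty$ at $0$ as a fine blow-up of some $T_k$ at $x_k$ along an auxiliary scale, combined with lower semicontinuity of $I_u(0,\rho)$ under strong $W^{1,2}$-convergence. A closely related subtlety is ensuring that the rescaled center manifolds $\overline{\Mcal}_k$ converge in a sufficiently strong $C^{3,\kappa}$-sense on compact sets, so that $\overline{N}_k$ and its frequencies pass to the limit cleanly and the splittings underlying the compactness procedure of Section \ref{ss:compactness} remain compatible across $k$; both of these points should follow from the uniform estimates in \cite[Sections 2--3]{dls2} and \cite[Section 3]{dls3}.
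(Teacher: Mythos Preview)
Your overall shape (one-step decay by compactness contradiction, then iterate) matches the paper in spirit, but two essential ingredients are missing.

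First, you tacitly assume the compactness procedure of Section~\ref{ss:compactness} applies at the contradiction scales $r_k$, but that procedure requires $r_k$ to lie in an interval of flattening, in particular $\Ebf(T_k,\Bbf_{6\sqrt{m}r_k}(x_k))\leq\eps_3^2$. Nothing in your setup forces this at a scale \emph{independent of $k$}; for each fixed $k$ the excess is eventually small (this is exactly what \cite[Proposition~7.2]{dlsk1} gives once $\Irm(T_k,x_k)>1$), but the threshold scale there depends on $T_k,x_k$ --- precisely the non-uniformity you are trying to remove. The paper treats this as a genuinely separate step (the final ``Proof of~\eqref{e:excess-decay}'' block): if the excess at starting scales $t_k$ fails to fall below $\eps^2$ below any uniform threshold, a double-index diagonal argument produces non-flat tangent cones at a definite excess level, and an anisotropic rescaling shows the associated diagonal coarse blow-up is $1$-homogeneous, contradicting $I_0>1$.

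Second, your route to $I_{u_\infty}(0)\geq I_0$ does not work as written: a further blow-up of $u_\infty$ at $0$ obtained by diagonalizing over $\rho_j\to 0$ and $k(j)\to\infty$ is not a fine blow-up of any single $T_{k_0}$ at $x_{k_0}$, so the definition of $\Irm(T_{k_0},x_{k_0})$ gives no lower bound on its frequency. The paper instead works with \emph{diagonal coarse blow-ups} (Lipschitz approximations over a plane, normalized by $\Ebf^{1/2}$, so that the transfer back to the tilt excess is direct) and invokes Proposition~\ref{p:coarse=fine} to identify the average-free part with a scalar multiple of the fine blow-up. To guarantee the hypothesis~\eqref{e:coarse-fine-lower-bound-scales} of that proposition, the paper does not attempt a one-step decay at arbitrary $r$; rather it formulates a decay property (Dec) across $\kappa$ consecutive intervals of flattening, uses the built-in $(2-2\delta_2)$-decay within each interval to reduce to scales with the two-sided bound~\eqref{e:bounds}, and runs the contradiction only there.
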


\begin{remark}
We note that the radius $r_0$ in Proposition \ref{p:tilt-decay} is crucially only dependent on the lower singularity degree bound $I_0$ and on geometric quantities, rather than the current $T$ and the center $x$.
\end{remark}

Proposition \ref{p:tilt-decay} and Section \ref{s:singularitydegree1plusdelta} represent together the first part of the proof of Theorem \ref{t:content}, showing that $\mathcal{H}^{m-2}\left(\mathfrak{F}_{Q,\leqslant 1+\delta}(T)\right)=0$, \textit{cfr.} Proposition \ref{p:nullset}.

\subsection{Diagonal coarse blowups}\label{ss:diagonal-coarse}
Let $T_k$, $\Sigma_k$ be respective sequences of currents and ambient manifolds satisfying Assumption \ref{a:main-2} and let $\Abf_k := \Abf_{\Sigma_k}$. Let $x_k \in \flatS_{Q}(T_k)$ and suppose that $r_k\in (s_{j(k)}(x_k), t_{j(k)}(x_k)]$. Denote further 
\[
    \overline{T}_k := (T_k)_{x_k,r_k}, \qquad \overline\Sigma_k:=(\Sigma_k)_{x_k,r_k},
\]
and assume that $\overline{T}_k\mres \Bbf_{6\sqrt{m}} \toweakstar Q\llbracket \pi_0 \rrbracket$ with $\pi_0 = \R^{m} \times \{0\} \subset \R^{m+n}$. Let $\pi_k\subset T_{0} \overline\Sigma_k$ be such that
\[
    \Ebf(\overline T_k, \Bbf_{8M},\pi_k) = \Ebf(\overline T_k, \Bbf_{8M})\,,
\]
where $M > 0$ is large enough such that $\Bbf_L \subset \Cbf_{4M\bar{r}_k}$ for any $L \in {\Wscr^{j(k)}}$ with $L\cap \overline{B}_{\bar{r}_k}(0,\pi_0) \neq \emptyset$ (cf. \cite{dls2} for the definitions). In light of the height bound \cite[Theorem 1.5]{spolaor_15}, for $k$ sufficiently large we have
\[
    \Ebf(\overline T_k, \Cbf_{4M},\pi_k) \leq \Ebf(\overline T_k, \Bbf_{8M}) =: E_k \to 0.
\]
By applying a small rotation, we may assume that $\pi_k \equiv \pi_0$. Assuming $\eps_3^2 \leq \eps_1$, where $\eps_1$ is the threshold of \cite[Theorem 2.4]{dls1}, we may apply the latter result to produce a strong Lipschitz approximation $f_k: B_1(\pi_0) \to \Acal_Q(\pi_0^\perp)$ for $\overline T_k$ in $\Cbf_{4M}$. We in turn define the normalizations
\begin{equation}\label{e:normalizations}
        \overline f_k := \frac{f_k}{E_k^{1/2}},
\end{equation}
and we work under the assumption
\begin{equation}\label{e:A-infinitesimal}
    \Abf_k^2 = o(E_k).
\end{equation}
Following the compactness procedure in \cite{skoro}, where the arguments of \cite{dls3} are adapted for a varying sequence of currents with varying blowup centers, we deduce that there exists a Dir-minimizer $\overline f:B_1(\pi_0)\to \Acal_Q(\pi_0^\perp)$ such that
\[
    \overline f_k \to \overline f \quad \text{strongly in $L^2 \cap W^{1,2}_\loc(B_1(\pi_0))$}.
\]

We refer to such a $\overline f$ as a \emph{diagonal coarse blow-up along $r_k$}.

A key intermediate result for the proof of Theorem \ref{p:tilt-decay}, which will also be useful in the Section \ref{s:singularitydegree1plusdelta}, is the following compactness result, which is a generalization of \cite[Proposition 4.1]{dlsk1}.

\begin{proposition}\label{p:coarse=fine}
    Let $T_k$, $\Sigma_k$ satisfy Assumption \ref{a:main-2}. Let $x_k \in \flatS_{Q}(T_k)$ and suppose that $r_k\in (s_{j(k)}(x_k), t_{j(k)}(x_k)]$ are radii satisfying
    \begin{equation}\label{e:coarse-fine-lower-bound-scales}
        \liminf_{k\to\infty}\frac{s_{j(k)}(x_k)}{r_k} > 0.
    \end{equation}
    Then \eqref{e:A-infinitesimal} holds and, up to extracting a subsequence, we can consider a diagonal coarse blow-up $\overline{f}$ and, up to another subsequence, a fine blow-up $u$. Denoting by $v$ the average-free part of $\overline{f}$, then there is a real number $\lambda>0$  such that $v=\lambda u$.
\end{proposition}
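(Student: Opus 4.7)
The plan is to proceed in three main steps.

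\textbf{Step 1: Verifying \eqref{e:A-infinitesimal}.} After rescaling by $r_k$, we have $\Abf_k^2 \leq C r_k^2 \overline{\eps}^2$, so it suffices to show $E_k \gg r_k^2$ as $r_k \to 0$. The hypothesis \eqref{e:coarse-fine-lower-bound-scales} together with $r_k \in (s_{j(k)}(x_k), t_{j(k)}(x_k)]$ ensures that $r_k$ and $s_{j(k)}(x_k)$ are comparable, so one of the stopping conditions defining $s_{j(k)}(x_k)$ (see \cite{dls3}) must be saturated at a scale comparable to $r_k$. Each such condition forces $E_k \gtrsim \overline\eps^2 r_k^{2-2\delta_2}$, which dominates $r_k^2$ since $\delta_2 < 1$.

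\textbf{Step 2: Constructing both blow-ups.} From the sequence $(T_k, \Sigma_k, x_k, r_k)$, I would apply the diagonal coarse blow-up procedure of Subsection \ref{ss:diagonal-coarse} to obtain a Dir-minimizer $\overline{f}$ as the $L^2 \cap W^{1,2}_\loc$-limit of $\overline{f}_k = f_k/E_k^{1/2}$, via the standard compactness machinery of \cite{skoro, dls3}. Simultaneously, I would apply the fine blow-up procedure of Subsection \ref{ss:compactness} to obtain a Dir-minimizer $u$ as the limit of $u_k = \overline{N}_k \circ \textbf{e}_k / \mathbf{h}_k$, using the reverse Sobolev inequality \cite[Corollary 5.3]{dls3} to get the requisite uniform $W^{1,2}$-control. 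A diagonal argument provides a common subsequence along which both limits exist.

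\textbf{Step 3: Identifying $v = \lambda u$.} Both $f_k$ and (the reparameterization via $\textbf{e}_k$ of) $\overline{N}_k$ approximate essentially the same sheets of $\overline{T}_k$, differing only by the graph of the rescaled center-manifold parameterization $\pmb{\Phi}_k$. Since $\pmb{\eta}\circ \overline{N}_k$ vanishes to higher order and the center manifold contributes only to the mean of $f_k$, quantitative center-manifold estimates from \cite{dls2, dls3} yield
\[
    \left\|v_k - \frac{\mathbf{h}_k}{E_k^{1/2}}\, u_k\right\|_{L^2(B_{3/2})} \to 0,
\]
where $v_k := \overline{f}_k - \pmb{\eta}\circ \overline{f}_k$. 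Moreover, the upper bound $\mathbf{h}_k \lesssim \mathbf{m}_{x_k,k}^{1/2} \sim E_k^{1/2}$ follows from the height bound for the normal approximation, while the lower bound $\mathbf{h}_k \gtrsim E_k^{1/2}$ follows from the reverse Sobolev inequality combined with \eqref{e:A-infinitesimal}. Hence, up to extracting a further subsequence, $\mathbf{h}_k/E_k^{1/2} \to \lambda \in (0, \infty)$, and passing to the strong $L^2$-limit gives $v = \lambda u$.

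\textbf{Main obstacle.} The difficulty lies in Step 3: the two approximations live on different base spaces (a flat cylinder versus the center manifold) and are tuned to different reference scales ($r_k$ for $\overline{f}_k$ versus $t_{j(k)}$ for $u_k$). Making their comparison precise at order $E_k^{1/2}$ requires careful bookkeeping of higher-order errors from the center-manifold estimates of \cite{dls2, dls3}; crucially, the hypothesis \eqref{e:coarse-fine-lower-bound-scales} prevents the scales $r_k$ and $s_{j(k)}$ from decoupling, which would otherwise cause $\mathbf{h}_k$ and $E_k^{1/2}$ to be of different orders and $\lambda$ to degenerate.
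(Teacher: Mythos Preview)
Your proposal is essentially correct and mirrors the approach the paper takes, which is simply to invoke \cite[Proposition 4.1]{dlsk1} verbatim together with the observation that \cite[Lemma 4.5]{dlsk1} (the quantitative comparison between the Lipschitz approximation $f_k$ and the normal approximation $N_k$) carries over unchanged when the currents and centers vary, as in \cite{skoro}. Your Steps 1--3 are exactly the skeleton of that argument.

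One minor imprecision: in Step 3 you attribute the lower bound $\mathbf{h}_k \gtrsim E_k^{1/2}$ to the reverse Sobolev inequality, but that inequality goes the other way (it controls the Dirichlet energy by the $L^2$ norm). The actual source of the lower bound is the stopping condition itself: at the scale $s_{j(k)}$ a cube in the Whitney decomposition stops, and the estimates of \cite[Proposition 3.7]{dls3} (the ``splitting before tilting'' lemma) then force $\int |N_k|^2$ to be bounded below by a quantity comparable to $\mbf_{x_k,k}$ times the appropriate power of the radius. Combined with \eqref{e:coarse-fine-lower-bound-scales}, this gives $\mathbf{h}_k \gtrsim E_k^{1/2}$ and hence $\lambda > 0$. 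With this correction, your outline matches the paper's intended argument.
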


\begin{proof}
    The proof of Proposition \ref{p:coarse=fine} follows verbatim that of \cite[Proposition 4.1]{dlsk1}, when combined with the observation that the conclusions of \cite[Lemma 4.5]{dlsk1} remain unchanged in the current setting with varying blowup centers (\textit{cfr.} also \cite{skoro}).
\end{proof}


\begin{proof}[Proof of Proposition \ref{p:tilt-decay}] 
Fix $I_0 > 1$ and assume that $\Irm(T,x) \geq I_0$. The main step of the proof is to demonstrate the following decay property (\textit{cfr.} \cite[(Dec)]{dlsk1}):

\begin{itemize}
    \item[\textbf{(Dec)}] There are $\varepsilon=\varepsilon(I_0,m,n,Q) \in (0, \varepsilon_3]$, $\alpha=\alpha\left(I_0, m, n, Q\right)>0$, $\kappa=\kappa(I_0,m,n,Q) \in \mathbb{N}$ and $\tau=\tau(I_0,m,n,Q) >0$ such that, if

\begin{equation}\label{e:excess-small-Dec}
    \mathbf{E}\big(T, \mathbf{B}_{6 \sqrt{m}\, t_k}(x)\big)<\varepsilon^2
\end{equation}

and $t_k \leq \tau$, then:
\begin{itemize}
\item[(a)]The intervals of flattening $( s_k, t_k], (s_{k+1}, t_{k+1}], \ldots,( s_{k+\kappa}, t_{k+\kappa}]$ satisfy $s_{k+j-1}=$ $t_{k+j}$ for $j=1, \ldots, \kappa$;
\item[(b)]$\pmb{m}_{x, k+\kappa} \leqslant\left(\frac{s_{k+\kappa}}{t_k}\right)^\alpha \pmb{m}_{x, k}$.
\end{itemize}
\end{itemize}

Note that $\varepsilon$ and $\kappa$ do \emph{not} depend on the point $x$, nor on the current $T$, but only on the lower bound $I_0$ for the singularity degree. This is the key difference between the argument herein and that in \cite[Proposition 7.1]{dlsk1}.

We first prove property \textbf{(Dec)}, and then we will prove that this implies the tilt excess decay conclusion \eqref{e:excess-decay}, that is Proposition \ref{p:tilt-decay}. Property (\textbf{Dec}) follows by a contradiction argument taking a sequence of currents $T_k$ and varying centers $x_k$, getting the contradiction with diagonal coarse blow-up. 

First we choose $\alpha<\min 2\{I_0-1,1-\delta_2\}$. The choice of $\tau$ and $\varepsilon$ are subordinate to $\kappa$, which will be chosen later: hence we fix $\kappa$ without specifying its choice and treat it as a constant in order to choose $\tau$ and $\varepsilon$. 

\proofstep{Proof of \textbf{(Dec)} (a).} We remark that, to show point (a), $\kappa$ is fixed and given by point (b); hence the proof of part (a) is analogous to that of \cite[Proposition 7.1 \& 7.2]{dlsk1}, but we recall it here for completeness.

We start imposing that $\tau$ is small enough so that $$\overline{\varepsilon}^2 \tau^{2-2 \delta_2} \leq \varepsilon^2.$$
Then we recall that $$\mathbf{E}(T, \mathbf{B}_{6 \sqrt{m}\,s_k}(x)) \leq C\left(\frac{s_k}{t_k}\right)^{2-2 \delta_2} {\textit{\text{\textbf{m}}}}_{x, k} \leq C {\textit{\text{\textbf{m}}}}_{x, k}=C \max \left\{\overline{\varepsilon}^2 t_k^{2-2 \delta_2}, \varepsilon^2\right\} \leq C \varepsilon^2
$$

for every $k\in \N$ such that $t_k \leq \tau$, where $C$ is a geometric constant independent of $\varepsilon$. In particular, if we choose $\varepsilon$ sufficiently small, we conclude that $\mathbf{E}(T, \mathbf{B}_{6 \sqrt{m}\, s_k}(x)) \leq \varepsilon_3^2$, which in turn forces $t_{k+1}=s_k$. Observe also that ${\textit{\text{\textbf{m}}}}_{x, k+1} \leq C {\textit{\text{\textbf{m}}}}_{x, k}$, where the latter is the same constant of the previous estimate. In particular, as long as $t_{k+i+1}=s_{k+i}$ for $i \in\{0, \ldots, j\}$, we get $\mathbf{E}(T, \mathbf{B}_{6 \sqrt{m} s_j}(x)) \leq C^{\,j} {\textit{\text{\textbf{m}}}}_{x, k}$. Since this must be repeated $\kappa$ times, under the assumption that $C^{\kappa_0} \varepsilon^2 \leq \varepsilon_3^2$, we get by induction that $t_{k+j+1}=s_{k+j}$ and ${\textit{\text{\textbf{m}}}}_{x,k+j+1} \leq C {\textit{\text{\textbf{m}}}}_{x,k+j} \leq C^{\,j+1} {\textit{\text{\textbf{m}}}}_{x, k}$.\\

\proofstep{Proof of \textbf{(Dec)} (b).} Here we need to be careful and check the dependency of the exponent $\kappa$, that we want to be uniform with respect to the points $x$.

We observe that to prove (b) it suffices to show
\begin{equation}\label{e:decay2}
\mathbf{E}(T, \mathbf{B}_{6 \sqrt{m} \,s_{k+\kappa-1}}(x)) \leq\left(\frac{s_{k+\kappa-1}}{t_k}\right)^\alpha {\textit{\text{\textbf{m}}}}_{x, k}.
\end{equation}

Indeed, if ${\textit{\text{\textbf{m}}}}_{x, k}=\overline{\varepsilon}^2 t_k^{2-2 \delta_2}$, since we have that $2-2 \delta_2>\alpha$, we get

$$
\begin{aligned}
{\textit{\text{\textbf{m}}}}_{x, k+\kappa} & =\max \left\{\mathbf{E}(T, \mathbf{B}_{6 \sqrt{m} \,s_{k+\kappa-1}}(x)), \overline{\varepsilon}^2 s_{k+\kappa-1}^{2-2 \delta_2}\right\} \leqslant\left(\frac{s_{k+\kappa-1}}{t_k}\right)^\alpha \overline{\varepsilon}^2 t_k^{2-2 \delta_2} \\
& =\left(\frac{s_{k+\kappa-1}}{t_k}\right)^\alpha {\textit{\text{\textbf{m}}}}_{x, k}.
\end{aligned}
$$

But if ${\textit{\text{\textbf{m}}}}_{x, k}=\mathbf{E}(T, \mathbf{B}_{6 \sqrt{m} \,t_k}(x))$, then $\mathbf{E}(T, \mathbf{B}_{6 \sqrt{m}\, t_k}(x)) \geq \overline{\varepsilon}^2 t_k^{2-2 \delta_2}$ and hence again

$$
\overline{\varepsilon}^2 s_{k+\kappa-1}^{2-2 \delta_2} \leq\left(\frac{s_{k+\kappa-1}}{t_k}\right)^\alpha \mathbf{E}(T, \mathbf{B}_{6 \sqrt{m}\, t_k}(x)) \leq\left(\frac{s_{k+\kappa-1}}{t_k}\right)^\alpha {\textit{\text{\textbf{m}}}}_{x,k+\kappa}.
$$

Towards proving \eqref{e:decay2}, we first notice that we may assume without loss of generality that
\[
    \Ebf(T,\Bbf_{6\sqrt{m}s_{k+\kappa-1}}(x)) \leq C^\kappa \left(\frac{s_{k+\kappa-1}}{t_k}\right)^{2-2\delta_2}\mbf_{x,k}\,,
\]
for an appropriate choice of a constant $\rho_\ell(\kappa,\alpha) >0$ such that $\frac{s_{k+\kappa-1}}{t_k} \leq \rho_\ell$. Indeed, taking any fixed choice of
\[
    \rho_\ell \leq C^{-\frac{\kappa}{2-2\delta_2-\alpha}}\,,
\]
we may absorb $C^\kappa$ on the right-hand side, retaining the desired exponent for the decay estimate (b).

Furthermore, from \cite[Proposition 2.2]{dls3}, we have $\frac{s_{k+\kappa-1}}{t_k} \leq 2^{-5\kappa}$, so we may restrict ourselves to the case when
\begin{equation}\label{e:bounds}
    \rho_\ell \leq \frac{s_{k+\kappa-1}}{t_k} \leq \rho_u = 2^{-5\kappa}\,.
\end{equation}
We are now in a position to prove \eqref{e:decay2} - under the assumption \eqref{e:bounds} - by contradiction, since in this case the hypothesis \eqref{e:coarse-fine-lower-bound-scales} of Proposition \ref{p:coarse=fine} will be satisfied for any choice of sequence of blow-up scales between $s_{k+\kappa-1}$ and $t_k$, and thus we will be able to extract a diagonal coarse blow-up along our contradiction sequence. More precisely, up to extracting a subsequence for the index of the starting scales, we suppose there exists sequences $x_k\in \flatS_Q(T_k)$ with $\Irm(T_k,x_k)\geq I_0$ and $t_k(x_k) \downarrow 0$ such that
\begin{equation}
    \mbf_{x,k} \downarrow 0 \qquad \text{but} \qquad \Ebf(T,\Bbf_{6\sqrt{m}s_{k+\kappa-1}}(x_k)) > \left(\frac{s_{k+\kappa-1}}{t_k}\right)^\alpha \mbf_{x,k}\,,
\end{equation}
while \eqref{e:bounds} holds true along the sequence. We note that the scales $s_{k+i}, t_{k+i}$ for $i\in \{0,\dots,\kappa-1\}$ depend implicitly on $k$. We now proceed exactly as in \cite[Proof of Proposition 7.1 \& 7.2]{dlsk1}, applying Proposition \ref{p:coarse=fine} in place of \cite[Proposition 4.1]{dlsk1} along the scales $r_k= \tfrac{3}{2}\sqrt{m} t_k$. This allows us to extract a diagonal coarse blow-up $\overline{f}$ along the sequence $r_k$ with average-free part $v$ satisfying $I_v(0)\geq I_0$ and that induces the estimate
\begin{align*}
    \Ebf(T_k, \mathbf{B}_{6\sqrt{m} \sigma t_k}(x_k)) &\leq 8^m \sigma^{2\alpha} \mathbf{E} (T_k, \mathbf{B}_{6\sqrt{m} t_k}(x_k)) + C (\mathbf{E} (T_k, \mathbf{B}_{6\sqrt{m} t_k}(x_k)) + t_k^2 \mathbf{A}_k^2)^{1+\gamma} \\
    &\leq C \mbf_{x_k,k}^{1+\gamma} {\leq \frac{1}{2} \rho_\ell^\alpha \mbf_{x_k,k}}
\end{align*}
for any $\sigma \in [\rho_\ell,\rho_u]$, provided that $8^m\rho_u^{2\alpha} \leq \tfrac{1}{2}\rho_\ell^\alpha$ and $\mbf_{x_k,k}^\gamma \leq \rho_\ell^\alpha$. Importantly, observe that these choices of $\rho_\ell,\rho_u$ (and hence $\kappa$) and $k$ large enough yields dependency only on $I_0,m,n$ and $Q$ (implicitly via $\alpha$).\\

\proofstep{Proof of \eqref{e:excess-decay}.}
Observe that it suffices to check that, up to decreasing $\eps$ further if necessary (with the same dependencies), there exists $\sigma = \sigma(I_0,m,n,Q)>0$ such that \eqref{e:excess-small-Dec} holds for every $t_k \leq \sigma$. Indeed, since $\sigma$ is independent of $x$, this will allow us to iterate \textbf{(Dec)} combined with \cite[Proposition 2.2 (iv)]{dls3} as in the proof of \cite[Proposition 7.2]{dlsk1} in order to conclude the decay estimate \eqref{e:excess-decay} with $r_0 = \min\{\sigma, \tau\}$.

Towards proving the validity of \eqref{e:excess-small-Dec} for $t_k$ below a \emph{uniform-in-$x$} threshold $\sigma(I_0,m,n,Q)$ and $\eps(I_0,m,n,Q)>0$ sufficiently small (possibly smaller than the previous threshold), we argue by contradiction, again aiming to take a diagonal coarse blow-up. Unlike in the proof of \textbf{(Dec)} (b), the contradiction will arise from the fact that we will obtain a coarse blow-up of degree 1, which cannot happen in light of the fact that $I_0>1$. Our argument follows the line of reasoning of the proof of \cite[Proposition 8.1]{dlsk1}, but since we are in a simpler setting and our notation differs greatly, we repeat it here.

If, regardless of the choice of $\eps>0$, \eqref{e:excess-small-Dec} fails to occur at a uniformly small scale $\sigma$ independent of $x$, then we have a sequence $\eps_j \downarrow 0$ and $x_{j,k} \in \Fcal_Q(T_{j,k})$ with $\Irm(T_{j,k},x_{j,k})\geq I_0$ and $t_{j,k}(x_{j,k}) \downarrow 0$ (again up to extracting a subsequence for the index $k$, with $j$ fixed, for $t_{j,k}$) such that
\[
    \Ebf(T_{j,k},\Bbf_{6\sqrt{m} t_{j,k}}(x_{j,k})) = \eps_j^2\,.
\]
This implies that, for each fixed $j\in \N$, up to extracting a subsequence in $k$, $T_{j,k}$ converges to a tangent cone $C_j$ which is non-flat. Since $C_j$ is a cone, this in turn implies
\[
    \lim_{k\to\infty} \frac{\Ebf(T_{j,k},\Bbf_{6\sqrt{m} t_{j,k}}(x_{j,k}))}{\Ebf(T_{j,k},\Bbf_{6\sqrt{m} s_{j,k}}(x_{j,k}))} = 1\,,
\]
where $s_{j,k}=s_{j,k}(x_{j,k})$ are the stopping scales associated to the starting scales $t_{j,k}$. Thus, we may extract a subsequence of indices $k(j)$ such that
\[
    \lim_{j\to\infty} \frac{s_{j,k(j)}}{t_{j,k(j)}} \geq c > 0\,.
\]
Since \eqref{e:coarse-fine-lower-bound-scales} is satisfied at the scales $t_{j,k(j)}$, we are now in a position to apply Proposition \ref{p:coarse=fine} to obtain a coarse blow-up $\overline{f}$ along these scales, whose average-free part $v$ is comparable to the corresponding fine blow-up. In particular, $I_v(0) \geq I_0$.

We will now proceed to show that $\overline{f}$ is $1$-homogeneous (and thus so is $v$), therefore reaching a contradiction to the fact that $I_0 >1$. Let $E_{j,k(j)} := \Ebf(T_{j,k(j)},\Bbf_{8Mt_{j,k(j)}}(x_{j,k(j)}))$ and recall the normalizations in \eqref{e:normalizations} such that
\[
    \bar{f}_{j,k(j)} = \frac{f_{j,k(j)}}{E_{j,k(j)}^{1/2}} = (\eps_j +o(1))^{-1} f_{j,k(j)}\,,
\]
for the Lipschitz approximations $f_{j,k(j)}$ of $\overline{T}_{j,k(j)}:=(T_{j,k(j)})_{x_{j,k(j)},t_{j,k(j)}}$ as constructed in Section \ref{ss:diagonal-coarse}, where $o(1)$ is a quantity which converges to zero as $j\to\infty$. Let $K_j\subset B_1(\pi_0)$ be the domain of graphicality, as given by \cite[Theorem 2.4]{dls1}, where
\begin{equation}\label{e:graphicality}
    \Gbf_{f_{j,k(j)}} \mres (K_j \times \pi_0^\perp) \equiv \overline{T}_{j,k(j)}\mres (K_j \times \pi_0^\perp)\,,
\end{equation}
and $|B_1 \setminus K_j| \leq C\eps_j^{2(1+\gamma_1)}$, where $\gamma_1>0$ is as in \cite[Theorem 2.4]{dls1}. Consider now the anisotropic rescalings $\lambda^a_j(x,y) := (x,\eps_j^{-1} y)$ for $(x,y)\in \pi_0\times \pi_0^\perp$. Since $T_{j,k(j)}$ converges to $C_j$ with $\Ebf(C_j,\Bbf_{6\sqrt{m} t_{j,k(j)}}(x_{j,k(j)}))=\eps_j^2$ as $k\to\infty$ for each $j$ fixed, then we conclude that $(\lambda^a_j)_\sharp \overline T_{j,k(j)} \mres (K_j\times \pi_0^\perp)$ converge in Hausdorff distance to a non-flat cone. In light of \eqref{e:graphicality} and the fact that $\eps_j^{-2}|B_1 \setminus K_j| \rightarrow 0$ as $j\rightarrow \infty$, we deduce that $\overline f$ is indeed 1-homogeneous. We once again refer the reader to the proof of \cite[Proposition 8.1]{dlsk1} for further details.
\end{proof}

\section{Flat singularities of degree $< 1+1/Q$}\label{s:singularitydegree1plusdelta}

In this section we conclude, together with Section \ref{s:tiltexcessdecay}, the first part in the proof of Theorem \ref{t:content}, showing that for every $\delta<1/Q$ we have that $\mathcal{H}^{m-2}\left(\mathfrak{F}_{Q,\leqslant 1+\delta}(T)\right)=0$. 

In particular, we suitably modify the arguments in the final part of \cite{dlmsk}, exploiting the idea that at points in $\mathfrak{F}_{Q,\leqslant 1+\delta}(T)$ all the coarse blow-ups are homogeneous with degree $d \in[1,1+\delta]$ and thus, for any $\delta<1/Q$, close to $1$-homogeneous Dir-minimizers. Towards this aim, in Lemma \ref{l:dimension-drop} we improve the dichotomy of \cite[Lemma 14.1]{dlmsk} by proving a quantitative version of it: this will allow us to apply the \emph{conical excess decay theorem} of \cite{dlmsk}, which we recall in Theorem \ref{t:conical-excess-decay}, to rescaled and translated currents $T_{q,r}$ with $q\in \mathfrak{F}_{Q,\leqslant 1+\delta}(T)$ and $r>0$ sufficiently small, hence achieving that $\flatS_{Q,\leqslant 1+\delta} (T)$ is an $\mathcal{H}^{m-2}$-null set for any $\delta < 1/Q$, and thus $\mathcal{H}^{m-2}\left(\mathfrak{F}_{Q,< 1+1/Q}(T)\right)=0$ also.

Following the notation of \cite{dlmsk}, for $Q\in\N$, we introduce the notation $\Cscr(Q)$ for subsets of $\mathbb R^{m+n}$ consisting of unions of $N$ $m$-dimensional planes $\pi_1, \ldots, \pi_N$ with $N\leq Q$ and such that
\begin{itemize}
    \item $\pi_i \cap \pi_j$ being the same fixed $(m-2)$-dimensional plane $V$ for each $i<j$;
    \item $\pi_i \subset \varpi$ for some $(m+\overline n)$-dimensional plane $\varpi$.
\end{itemize}
If $p\in \Sigma$, then $\mathscr{C} (Q, p)$ will in turn denote the subset of $\mathscr{C} (Q)$ for which $\varpi = T_p \Sigma$.

We further let $\mathscr{P}$ and $\mathscr{P} (p)$ denote those elements of $\mathscr{C} (Q)$ and $\mathscr{C} (Q,p)$ respectively which consist of a single plane; namely, with $N=1$. For $\mathbf{S}\in \mathscr{C} (Q)\setminus \mathscr{P}$, the associated $(m-2)$-dimensional plane $V$ described above is referred to as the {\em spine of} $\mathbf{S}$ and will often be denoted by $V (\mathbf{S})$.

Given a ball $\Bbf_r(q) \subset \R^{m+n}$ and a cone $\mathbf{S}\in \mathscr{C} (Q)$, we define the \emph{one-sided conical $L^2$ height excess of $T$ relative to $\Sbf$ in $\Bbf_r(q)$}, denoted $\hat{\Ebf}(T, \mathbf{S}, \Bbf_r(q))$, by
	\[
		\hat{\Ebf}(T, \mathbf{S}, \Bbf_r(q)) \coloneqq \frac{1}{r^{m+2}} \int_{\Bbf_r (q)} \dist^2 (p, \mathbf{S})\, d\|T\|(p).
	\]
At the risk of abusing notation, we further define the corresponding \emph{reverse one-sided excess} as
 \[
\hat{\Ebf} (\mathbf{S}, T, \Bbf_r (q)) \coloneqq \frac{1}{r^{m+2}}\int_{\Bbf_r (q)\cap \mathbf{S}\setminus \Bbf_{ar} (V (\mathbf{S}))}
\dist^2 (x, {\rm spt}\, (T))\, d\mathcal{H}^m (x)\, ,
\]
where $a=a(m)$ is a dimensional constant, to be specified later. We subsequently define the \emph{two-sided conical $L^2$ height excess} as 
\[
    \mathbb{E} (T, \mathbf{S}, \Bbf_r (q)) :=
\hat{\Ebf} (T, \mathbf{S}, \Bbf_r (q)) + \hat{\Ebf} (\mathbf{S}, T, \Bbf_r (q))\, .
\]
We finally introduce the \emph{planar $L^2$ height excess} which is given by
\[
\Ebf^p (T, \Bbf_r (q)) := \min_{\pi\in \mathscr{P} (q)} \hat{\Ebf} (T, \pi, \Bbf_r (q))\, .
\]

We can state now the key \emph{conical excess decay theorem} from \cite[Theorem 2.5]{dlmsk}.

\begin{theorem}[Conical excess decay]\label{t:conical-excess-decay}
For every $Q,m,n$, $\overline n$, and $\varsigma>0$, there are positive constants $\varepsilon_0 = \varepsilon_0(Q,m,n,\overline n, \varsigma) \leq \frac{1}{2}$, $r_0 = r_0(Q,m,n,\overline n, \varsigma) \leq \frac{1}{2}$ and $C = C(Q,m,n,\overline n)$ with the following property. Assume that 
\begin{itemize}
\item[(i)] $T$ and $\Sigma$ are as in Assumption \ref{a:main};
\item[(ii)] $\|T\| (\Bbf_1) \leq (Q+\frac{1}{2}) \, \omega_m$;
\item[(iii)] There is $\mathbf{S}\in \mathscr{C} (Q, 0)\setminus\Pscr(0)$ such that 
\begin{equation}\label{e:smallness}
\mathbb{E} (T, \mathbf{S}, \Bbf_1) \leq \varepsilon_0^2 \,\mathbf{E}^p (T, \Bbf_1)\, 
\end{equation}
and 
\begin{equation}\label{e:no-gaps}
\Bbf_{ \varepsilon_0} (\xi) \cap \{p: \Theta (T,p)\geq Q\}\neq \emptyset, \, \text{ for every } \xi \in V (\mathbf{S})\cap \Bbf_{1/2}\, ;
\end{equation}
\item[(iv)] $\mathbf{A}^2 \leq \varepsilon_0^2 \, \mathbb{E} (T, \mathbf{S}', \Bbf_1),$ \, \text{for every} $\mathbf{S}'\in \mathscr{C} (Q, 0)$.
\end{itemize}
Then there is a $\mathbf{S}'\in \mathscr{C} (Q,0) \setminus \mathscr{P} (0)$ such that 
\begin{enumerate}
    \item [\textnormal{(a)}] $\mathbb{E} (T, \mathbf{S}', \Bbf_{r_0}) \leq \varsigma \,  \mathbb{E} (T, \mathbf{S}, \Bbf_1)\,$; \\
    \item [\textnormal{(b)}] $\dfrac{\mathbb{E} (T, \mathbf{S}', \Bbf_{r_0})}{\mathbf{E}^p (T, \Bbf_{r_0})} 
\leq 2 \varsigma \,\dfrac{\mathbb{E} (T, \mathbf{S}, \Bbf_1)}{\mathbf{E}^p (T, \Bbf_1)}$; \\
    \item [\textnormal{(c)}] $\dist^2 (\Sbf^\prime \cap \Bbf_1,\Sbf\cap \Bbf_1) \leq C \, \mathbb{E} (T, \mathbf{S}, \Bbf_1)$;
    \item[\textnormal{(d)}] $\dist^2 (V (\mathbf{S}) \cap \Bbf_1, V (\mathbf{S}')\cap \Bbf_1) \leq C \, \dfrac{\mathbb{E}(T,\mathbf{S},\Bbf_1)}{\Ebf^p(T,\Bbf_1)}$\, .\label{e:spine-change}
    \end{enumerate}
\end{theorem}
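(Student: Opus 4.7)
The theorem is a decay statement at a non-flat cone $\Sbf$ of multiplicity $Q$, so I would attack it by a contradiction-plus-compactness (``blow-up over $\Sbf$'') argument mirroring the proof of flat tilt-excess decay, with the union-of-planes cone $\Sbf$ and its spine $V(\Sbf)$ playing the role of the single reference plane. Assuming the conclusion fails for some fixed $\varsigma>0$, one extracts sequences of currents $T_k$, ambient manifolds $\Sigma_k$ and cones $\Sbf_k\in \Cscr(Q,0)\setminus \Pscr(0)$ satisfying (i)-(iv) with $\eps_0 = \eps_k \downarrow 0$, but for which no admissible $\Sbf_k'$ realises (a). Normalising by $E_k^2 := \mathbb{E}(T_k,\Sbf_k,\Bbf_1)$, hypothesis (iv) gives $\Abf_k^2 = o(E_k^2)$ and (ii) gives the mass bound needed for compactness.

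The next step is to produce the ``coarse blow-up over the cone.'' Near each plane $\pi_i^k$ of $\Sbf_k$, but outside a tubular neighbourhood of width $\sim a$ of the spine $V(\Sbf_k)$ (precisely the region excluded in the definition of $\hat\Ebf(\Sbf,T,\sbullet)$), the smallness of $\mathbb{E}(T_k,\Sbf_k,\Bbf_1)$ together with a sheet-separation argument allows one to invoke a Lipschitz approximation in the spirit of \cite[Theorem 2.4]{dls1} to represent $T_k$ as a $Q_i$-valued graph $f_k^{(i)}$ over $\pi_i^k$, with $\sum_i Q_i = Q$. The normalised maps $u_k^{(i)} := f_k^{(i)}/E_k$ have uniformly bounded Dirichlet energy and, after passing to a subsequence, converge to limit maps $u^{(i)}$ on the limiting planes $\pi_i^\infty$ of some $\Sbf^\infty \in \Cscr(Q,0)$. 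Standard competitor/comparison arguments promote each $u^{(i)}$ to a Dir-minimiser on $\pi_i^\infty\setminus V(\Sbf^\infty)$, while $\Abf_k^2 = o(E_k^2)$ ensures ambient curvature plays no role at the limit. Crucially, the no-gaps condition (iii) passes to the limit and forces $Q$-density points of $u$ to be dense along $V(\Sbf^\infty)$, which yields matching conditions for the tuple $(u^{(i)})$ across the spine and prevents the limit from collapsing to a single plane.

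Given such a $u$, I would then decouple the ``cone degrees of freedom''---infinitesimal rotations of the constituent planes $\pi_i^\infty$ and translations of $V^\infty$---whose sizes are controlled by $E_k$ and directly produce the statements (c) and (d). The residual part is a multi-valued Dir-minimiser vanishing on the spine, whose frequency is therefore $\geq 1$; but a $1$-homogeneous such residue would already correspond to an infinitesimal deformation of $\Sbf^\infty$, which has already been subtracted. One concludes that the relevant frequency is strictly larger than $1$, and invoking the resulting polynomial decay of the residue at scale $r_0$ gives conclusion (a) for any prescribed $\varsigma$, provided $r_0$ is chosen small (in a manner depending only on $Q,m,n,\overline n,\varsigma$). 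Conclusion (b) follows because $\Ebf^p(T_k,\Bbf_{r_0})$ is comparable to $\Ebf^p(T_k,\Bbf_1)$ up to a multiplicative constant depending only on $r_0$ and dimension, as readily seen from the compactness argument.

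The principal difficulty will lie in the middle step: building a genuinely faithful graphical parametrisation of $T_k$ over $\Sbf_k$ near the spine and extracting a Dir-minimising limit without losing information across $V^\infty$. Near the spine, sheets over different planes of $\Sbf_k$ are almost meeting, so one cannot simply apply Almgren's approximation plane-by-plane; a refined ``fine approximation'' is needed, and one must carefully rule out mass leakage into the annulus of width $\sim a$ around $V(\Sbf_k)$. Ensuring that the limit object is a well-defined Dir-minimiser with the right boundary behaviour across $V^\infty$ is precisely the heart of the construction in \cite{dlmsk}; once this infrastructure is in place, the remaining decay argument is driven by the monotonicity of Almgren's frequency and is by now standard.
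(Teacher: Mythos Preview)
The paper does not prove this theorem: it is quoted verbatim as \cite[Theorem 2.5]{dlmsk} and used as a black box. There is therefore no ``paper's own proof'' to compare against. Your outline is, however, a faithful high-level description of the strategy that \cite{dlmsk} actually implements: a contradiction/compactness argument, graphical approximation of $T_k$ over each plane of $\Sbf_k$ away from the spine, extraction of a Dir-minimising limit with matching conditions across $V(\Sbf^\infty)$ forced by the no-gaps hypothesis \eqref{e:no-gaps}, and a frequency/Hardt--Simon-type argument to upgrade this to the decay (a)--(d). You have also correctly identified the genuine technical core, namely the construction of a coherent multi-sheeted approximation near the spine and the passage to a well-posed limiting problem across $V^\infty$; this is precisely the bulk of the work in \cite{dlmsk}.
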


Recall that in \cite[Theorem 2.5]{dlmsk}, Theorem \ref{t:conical-excess-decay} is applied to rescaled and translated currents $T_{q,r}$ with $q\in \flatS_{Q,1}(T)$ and $r>0$ sufficiently small. In order to do this, we need to verify that for most such points $q$ such rescalings satisfy \eqref{e:smallness} for some $\Sbf\in \Cscr(Q,0)\setminus \Pscr(0)$. This is indeed true, as demonstrated in \cite[Lemma 14.1]{dlmsk}: this is a consequence of \cite[Corollary 4.3]{dlsk1} and the classification of 1-homogeneous $Q$-valued Dir-minimizers on $\R^2$ as being superpositions of linear functions (see \cite[{Proposition 5.1}]{dlsmams}). Here we demonstrate that this in fact remains true for $q\in \flatS_{Q,< 1+1/Q}(T)$.

\begin{lemma}\label{l:dimension-drop}
    For each $\eps \in (0,1]$, the following holds. Suppose that $T$ and $\Sigma$ are as in Assumption \ref{a:main}. Then for each $p \in \flatS_{Q,< 1+1/Q} (T) \cap\Bbf_1$ there exists $\overline\rho=\overline\rho(p,\eps)>0$ such that the following dichotomy holds for each $\rho\in (0,\overline\rho]$: 
    \begin{itemize}
        \item[(a)] There exists $\Sbf\in \Cscr(Q,p)\setminus\Pscr(p)$ with
        \[
            (\rho\Abf)^2 + \Ebb(T,\Sbf,\Bbf_\rho(p)) \leq \eps^2 \Ebf^p(T, \Bbf_\rho(p));
        \]
        \item[(b)] There exists an $(m-3)$-dimensional affine subspace $V\subset T_p\Sigma$ (depending on $\rho$) such that
        \[
            \Sing_Q(T)\cap {\Bbf}_\rho(p) \subset \{q:\dist(q,V)< \eps\rho \}.
        \]
    \end{itemize}
\end{lemma}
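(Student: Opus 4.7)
My plan is to adapt the proof of \cite[Lemma 14.1]{dlmsk}, which handles the case $\Irm(T,p)=1$, using a frequency-gap observation to cover the hypothesis $\Irm(T,p)<1+\frac{1}{Q}$. I would argue by contradiction: if the dichotomy fails at some $p\in \flatS_{Q,<1+1/Q}(T)\cap \Bbf_1$ for a fixed $\eps\in (0,1]$, then there exists $\rho_k\downarrow 0$ at which neither (a) nor (b) holds. I would apply the coarse blow-up procedure of Section \ref{ss:compactness} to $T$ at $p$ along the scales $\rho_k$: after passing to a subsequence and rotating so that the limiting tangent plane is a fixed $\pi_0$, the normalized Lipschitz approximations converge in $L^2\cap W^{1,2}_{\loc}(B_{3/2})$ to a Dir-minimizer $\overline{f}:B_{3/2}(0,\pi_0)\to \Acal_Q(\pi_0^\perp)$ with $\|\overline{f}\|_{L^2(B_{3/2})}=1$. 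By Proposition \ref{p:coarse=fine}, the average-free part $v$ of $\overline{f}$ satisfies $v=\lambda u$ for some $\lambda>0$ and some fine blow-up $u$ of $T$ at $p$, so $I_v(0^+)=I_u(0)\in \Fcal(T,p)$.

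Next, using $\Irm(T,p)<1+\frac{1}{Q}$ together with the infimum characterization of $\Irm(T,p)=\inf \Fcal(T,p)$ and a careful diagonal scale-selection (aligning the sequence $\rho_k$ with a minimizing sequence for $\inf \Fcal$ via monotonicity of the smoothed frequency), I would arrange $I_v(0^+)=\Irm(T,p)<1+\frac{1}{Q}$. The classification of 1-homogeneous $Q$-valued Dir-minimizers as superpositions of linear functions (cf.\ \cite[Proposition 5.1]{dlsmams} in the base case $m=2$, extended to general $m$ by dimension reduction along the spine) implies that the admissible homogeneity degrees of nontrivial 1-homogeneous $Q$-valued Dir-minimizers form a discrete set contained in $\{1\}\cup[1+\tfrac{1}{Q},\infty)$, yielding a gap on $(1,1+\tfrac{1}{Q})$. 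This would force $I_v(0^+)=1$, and hence the tangent $\tilde{v}$ of $v$ at the origin is a nontrivial, 1-homogeneous, average-free Dir-minimizer of the form $\tilde{v}=\sum_{i=1}^N k_i\,\llbracket L_i\rrbracket$ with pairwise distinct linear $L_i:\pi_0\to \pi_0^\perp$, $\sum_i k_i=Q$, and $N\geq 2$. Adding back the linear tangent at $0$ of $\pmb{\eta}\circ\overline{f}$ produces the tangent of $\overline{f}$ itself, whose graph is a cone $\Sbf_0\in \Cscr(Q,0)\setminus \Pscr(0)$ with spine $V_0$ of dimension at most $m-2$.

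Finally, I would split on $\dim V_0$ to derive the contradiction. If $\dim V_0=m-2$, then descaling $\Sbf_0$ and translating back to $p$ at scale $\rho_k$ produces $\Sbf_k\in \Cscr(Q,p)\setminus \Pscr(p)$, and the strong convergence of the coarse blow-up (together with a further tangent analysis at $0$), the Lipschitz approximation \cite[Theorem 2.4]{dls1}, and the smallness $\Abf_k^2 = o(E_k)$ should yield $(\rho_k\Abf)^2+\Ebb(T,\Sbf_k,\Bbf_{\rho_k}(p)) \leq \eps^2\,\Ebf^p(T,\Bbf_{\rho_k}(p))$ for large $k$, contradicting the failure of (a). If instead $\dim V_0\leq m-3$, then by upper semicontinuity of the density-$Q$ set under weak convergence of currents, every density-$Q$ singular point of $T$ in $\Bbf_{\rho_k}(p)$ should lie within the $\eps\rho_k$-neighborhood of an $(m-3)$-dimensional affine subspace $V_k$ close to a translate of $V_0$, contradicting the failure of (b). The principal obstacle will be the frequency identification $I_v(0^+)=\Irm(T,p)$, since Proposition \ref{p:coarse=fine} by itself only yields $I_v(0^+)\in \Fcal(T,p)$, a set which a priori contains values strictly larger than $\Irm(T,p)$; overcoming this requires aligning the subsequential selection with a minimizing sequence for $\inf \Fcal(T,p)$ via a diagonal argument, after which the remainder parallels the treatment in \cite[Lemma 14.1]{dlmsk}.
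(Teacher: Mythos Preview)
Your contradiction setup and the identification of the two-dimensional classification \cite[Proposition 5.1]{dlsmams} as the source of the frequency gap are on the right track, but the argument has a structural problem. The gap $(1,1+\tfrac{1}{Q})$ for admissible homogeneity degrees is a two-dimensional statement; no such gap is available for homogeneous $Q$-valued Dir-minimizers in dimension $m\geq 3$ unless one first knows the invariance subspace is $(m-2)$-dimensional and reduces to two variables. Your proposed order---invoke the gap to force $I_v(0^+)=1$, and only afterwards split on $\dim V_0$---is therefore circular. The paper instead first obtains from \cite[Corollary 4.3]{dlsk1} (with \cite[Theorem 2.10(vi)]{dlsk1} verifying its hypothesis) a coarse blow-up $f$ that is already \emph{radially homogeneous of degree exactly} $\Irm(T,p)$, and only then splits on the dimension of the invariance subspace $V$ of $f$ itself: when $\dim V=m-2$ one reduces to a two-dimensional minimizer of degree $\Irm(T,p)\in[1,1+\tfrac{1}{Q})$, and at that point the classification forces degree $1$ and $f$ is a superposition of linear maps, contradicting the failure of (a); when $\dim V\leq m-3$ no gap is invoked and one contradicts (b) via persistence of $Q$-points.

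There is a second, related gap: even granting $I_v(0^+)=1$, this says only that the tangent $\tilde v$ of $v$ at the origin is $1$-homogeneous; it does not make $v$ itself homogeneous (frequency equal to $1$ at $r=0$ is compatible with $I_v(r)>1$ for $r>0$). Since the rescaled current at scale $\rho_k$ is approximated by $v$ at unit scale and not by $\tilde v$, the structure of $\tilde v$ does not produce a cone $\Sbf_k$ close to $T$ at scale $\rho_k$, so you cannot contradict the failure of (a). The use of \cite[Corollary 4.3]{dlsk1} bypasses both this scale mismatch and your acknowledged difficulty of identifying $I_v(0^+)$ with $\Irm(T,p)$, since that result delivers homogeneity of the correct degree along the given contradiction sequence directly. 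Two further omissions: you do not treat the case where $\Ebf^p(T,\Bbf_{6\sqrt{m}\rho_k}(p))$ fails to tend to zero (there the limit is a non-flat tangent cone and one argues via its spine and upper semicontinuity of density), and your appeal to Proposition~\ref{p:coarse=fine} requires the hypothesis \eqref{e:coarse-fine-lower-bound-scales}, which is not verified for the contradiction scales.
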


\begin{remark}
Note that in the alternative (b), we may ask for the whole set of $Q$-points $\Sing_Q(T)$ to be contained in a small neighborhood of $V$, rather than just $\Ffrak_{Q, < 1+ 1/Q}(T)$, since it is simply a consequence of the fact that \emph{all} the density $Q$ points of $T$ are accumulating around either the spine of a cone that is $(m-3)$-invariant, or the spine of an $\Acal_Q$-valued Dir-minimizer that is $(m-3)$-invariant.
\end{remark}

\begin{proof}
    We proceed by contradiction via a compactness argument following a very similar line of reasoning to that in the proof of \cite[Lemma 14.1]{dlmsk}, only with the singularity degree lying in $[1,1+1/Q)$ instead of being equal to 1. The key observation will be that in the case when $T$ is close to planar and there is a coarse blow-up that is $(m-2)$-invariant, the classification of homogeneities for the blow-up in two dimensions (see \cite[Proposition 5.1]{dlsmams}) guarantees that the singularity degree must in fact be exactly equal to 1. Note that, in contrast to the compactness contradiction arguments in \cite[Lemma 14.1]{dlmsk}, we do not need to take a varying sequence of currents and centers. Indeed, since the scale $\bar\rho$ is allowed to (and in fact necessarily must be) dependent on the center $p$, we may simply fix $T$, $\Sigma$ and $p\in \flatS_{Q,< 1+1/Q}(T)\cap \Bbf_1$ when extracting the contradiction sequence; namely, for some $\eps \in (0,1]$, we assume that there exists a sequence of scales $\bar\rho_k\downarrow 0$ such that the rescaled currents $T_{p,\bar\rho_k}\mres \Bbf_{6\sqrt{m}}$ and their corresponding rescaled ambient manifolds $\Sigma_k := \Sigma_{p,\bar\rho_k}$ (recall the notation $\Abf_k := \Abf_{\Sigma_k}$) satisfy both
    \begin{equation}\label{e:m-2-inv-blowup}
        \Abf_k^2 + \Ebb(T_{p,\bar\rho_k},\Sbf,\Bbf_1) > \eps^2 \Ebf^p(T_{p,\bar\rho_k}, \Bbf_1) \qquad \forall \ \Sbf \in \Cscr(Q)\setminus\Pscr\,,
    \end{equation}
    and the property that for every $(m-3)$-dimensional subspace $V\subset T_0 \Sigma_{k}$ there exists a point $q_{k,V}$ such that
    \begin{equation}\label{e:lower-spine-blowup}
        q_{k,V} \in \Sing_Q(T_{p,\bar\rho_k})\cap {\Bbf}_1 \setminus \{\dist(\cdot,V)< \eps\}\,.
    \end{equation}
    Up to extracting a further subsequence, we have two possible alternatives: either we have that $\Ebf^p(T, \Bbf_{6\sqrt{m}\bar\rho_k}(p)) \to 0$, or not. In the latter case, the sequence $T_{p, \bar\rho_k}\mres \Bbf_{6\sqrt{m}}$ converges weakly-$*$ (and in $L^2$-excess) to a non-flat tangent cone $C$. Let $V$ denote the subspace representing the spine of $C$. Since $C$ is not supported in an $m$-dimensional plane, $\dim V \leq m-2$. Since $\Abf_k^2 = \bar\rho_k^2 \Abf^2 \to 0$ but $\liminf_{k\to\infty}\Ebf^p(T_{p,\bar\rho_k}, \Bbf_{1}) \geq c_0 > 0$ (and noticing that this is comparable to the excess at the slightly larger scale $6\sqrt{m}$), the condition \eqref{e:m-2-inv-blowup} guarantees that we in fact must have $\dim V \leq m-3$. Taking this choice of $V$ in \eqref{e:lower-spine-blowup} yields a corresponding sequence of points $q_{k,V}$ with the stated property. Extracting yet another non-relabelled subsequence, we have that $q_{k,V} \to \bar q\in \Bbf_1\setminus \{\dist(\cdot,V)< \eps\}$. On the other hand, the upper semicontinuity of the density $\Theta(T_{p,\bar\rho_k},\cdot)$ along our sequence implies that $\Theta(C, \bar q) \geq Q$, and thus $\bar q$ must lie in the spine of $C$, yielding a contradiction.

    It remains to consider the alternative where $\Ebf^p(T, \Bbf_{6\sqrt{m}\bar\rho_k}(p)) \to 0$. In this case, due to the comparability of tilt and planar excesses, we may use \cite[Corollary 4.3]{dlsk1} (together with \cite[Theorem 2.10(vi)]{dlsk1} which a posteriori ensures that the hypothesis (21) therein holds) to extract a coarse blowup $f\in W^{1,2}(B_1(\pi_0);\Acal_Q(\pi_0^\perp \subset \R^{m+n}))$ of $T$ at $p$ that is a non-trivial, radially homogeneous Dir-minimizer of degree $\Irm(T, p) < 1+\tfrac{1}{Q}$ with $\boldsymbol\eta\circ f = 0$. In the process of taking this coarse blowup, we may assume by a rotation of coordinates that the Lipschitz approximations of $T_{p,\bar\rho_k}$ are always parameterized over the fixed plane $\pi_0$. Moreover, note that the validity of \cite[(3.4)]{dlsk1} guarantees that
    \begin{equation}\label{e:sff-scales-away}
        \frac{\bar\rho_k^2 \Abf_k^2}{\Ebf^p(T, \Bbf_{6\sqrt{m}\bar\rho_k}(p))} \to 0\,.
    \end{equation}
    Let $V \subset \pi_0$ denote the subspace of maximal dimension in which $f$ is translation-invariant. In light of \cite[Theorem 0.11]{dlsmams}, we again deduce that $\dim V \leq m-2$. If $\dim V = m-2$, letting $(x,y) \in V\times V^{\perp_{\pi_0}}\cong \R^{m-2}\times \R^2$ denote the corresponding splitting of the coordinates, $f(x,y)$ canonically identifies with $g(y)$ for an $\Irm(T,p)$-homogeneous Dir-minimizer $g\in W^{1,2}(B_1\subset \R^2;\Acal_Q(\R^n))$ (after additionally performing a rotation of the coordinates). Since $1\leq \Irm(T,p) < 1+\frac{1}{Q}$, the classification \cite[Proposition 5.1]{dlsmams} guarantees that $\Irm(T,p) = 1$ and $g$ (and therefore $f$) is a superposition of (at least two distinct) linear functions, inducing a cone $\Sbf\in \Cscr(Q)\setminus \Pscr$. Combining this with \eqref{e:m-2-inv-blowup}, \eqref{e:sff-scales-away}, the estimates in \cite[Theorem 2.4]{dls1} and the strong $L^2$-convergence of the Lipschitz approximations for $T_{p,\bar\rho_k}$ to $f$ after normalizing by $\Ebf(T, \Bbf_{6\sqrt{m}\bar\rho_k}(p))$, yields the desired contradiction in this case. On the other hand, if $\dim V \leq m-3$, we take this choice of $V$ in \eqref{e:lower-spine-blowup} to obtain the corresponding sequence of points $q_{k,V}$, which, by \cite[Theorem 2.7]{dls1} satisfy $\mathbf{p}_{\pi_0}(q_{k,V}) \to \bar z \in B_1(\pi_0)$ with $f(\bar z)=Q\llbracket \boldsymbol\eta\circ f(\bar z)\rrbracket = Q\llbracket 0\rrbracket$. This implies that $\bar z \in V$, contradicting the fact that $\dist(q_{k,V}, V) \geq \eps$ for each $k$.
\end{proof}

\begin{remark}
    We further remark that as a consequence of Lemma \ref{l:dimension-drop} we are in fact able to prove that at $\Hcal^{m-2}$-a.e. flat singular $Q$-point the singularity degree is at least $1+1/Q$. However, such a classification at \emph{all} such points remains open.
\end{remark}

As a corollary of Proposition \ref{p:tilt-decay} and Lemma \ref{l:dimension-drop} we immediately obtain Theorem \ref{p:nullset}.

\section{Flat singularities of degree $\geq 1 + 1/Q$}\label{s:singularitiesgeq}
Throughout this section, we fix $\delta\in (0,1/Q)$ arbitrarily. We follow the methods of \cite{dlsk2} to deduce the Minkowski content bounds for $\flatS_{Q, \geqslant 1+\delta}(T)$ claimed in Theorem \ref{p:contentfromNV}.

Recall that in \cite{dlsk2}, the first step is a decomposition of $\Ffrak_{Q, > 1}(T)$ into countably many pieces based on the value of $\Irm(T, \cdot)$. The key difference herein is that we do not need to decompose $\Ffrak_{Q, \geqslant 1+ \delta}(T)$ into countably many subsets, but we may rather just work with the entirety of this set as a single piece $\Sfrak = \Sfrak_{K_0}$ of the countable decomposition
\[
    \Ffrak_Q(T)\cap \Bbf_1 = \bigcup_{K \in \N} \Sfrak_K, \qquad \Sfrak_K := \{x\in \Ffrak_Q(T) : \Irm(T,x) \geq 1+2^{-K} \} \cap {\Bbf}_1\,.
\]
Indeed, we may simply choose $K_0(Q) \geq \frac{\log Q}{\log 2}$. In order to work with $\Sfrak_{K_0}$ in its entirety, without having to take a countable union of sets
\[
    \tilde\Sfrak_K := \{x\in \Ffrak_Q(T) : 2-\delta_2 - 2^{-K} \geq \Irm(T,x) \geq 1+2^{-K} \} \cap {\Bbf}_1, \qquad K \in \N\,,
\]
we rely on an adaptation of such a countable decomposition that was recently introduced by the second author with Minter, Parise \& Spolaor in \cite{Semicalibrated} as a way of simplifying the decomposition procedure. Therein such a refined decomposition procedure is not necessary and is merely used for convenience, whereas here it is crucial in order to obtain the content bound \eqref{e:content}. Indeed, the methods of \cite{dlsk2} yield such an estimate for each piece $\tilde\Sfrak_{K,J}$ of $\tilde\Sfrak_K$, where the index $J$ characterizes the first scale at which the tilt excess decay of \cite[Proposition 7.2]{dlsk1} kicks in (see \cite[Theorem 9.7]{dlsk2}), but do not rule out the possibility of Minkowski content concentration between these sets since they are not closed. Since the main improvement of our Proposition \ref{p:tilt-decay} relative to \cite[Proposition 7.2]{dlsk1} is precisely that the starting scale $r_0$ is dependent only on a lower bound $I_0$ on the singularity degree, we are able to avoid such a countable decomposition.

For the purpose of clarity, we provide the setup and describe the key ideas here and we refer the reader to \cite{dlsk2}, \cite[Part 2]{Semicalibrated} for the full proof.

First of all, note that Proposition \ref{p:tilt-decay} guarantees that for each $x\in \Sfrak$, there exists $r_0(\delta,m,n)>0$ such that for $\mu=2^{-K_0-1}$ and $0< r< s< r_0$ we have
\begin{equation}\label{e:tilt-decay-NV}
    \Ebf(T,\Bbf_r(x)) \leq \left(\frac{r}{s}\right)^{2\mu} \max\{\Ebf(T,\Bbf_{s}(x)), \bar\eps^2 s^{2-2\delta_2}\}\,.
\end{equation}

Now, consider a point $x\in \Sfrak$. Observe that \cite[Theorem 2.10]{dlsk1} tells us that $x$ has corresponding intervals of flattening $\{(t_{k+1},t_k]\}_{k\geq 0}$ with $\inf_k \frac{t_{k+1}}{t_k} > 0$. Following the setup in Section \ref{s:prelim} with the fixed center $x$, consider the corresponding center manifolds $\Mcal_{x,k}$ and normal approximations $N_{x,k}$, together with a geometric blow-up sequence of scales $\{\gamma^j\}_{j\geq 0}$. 

For $j=0$, let $\widetilde\Mcal_{x,0} =\Mcal_{x,0}$ and $\widetilde N_{x,0} = N_{x,0}$. For $j=1$, if $\gamma$ lies in the same interval of flattening to $t_0=1$, let 
\[
    \widetilde\Mcal_{x,1} := \iota_{0,\gamma}(\Mcal_{x,0}), \qquad \widetilde N_{x,1}(x) := \frac{N_{x,0}(\gamma x)}{\gamma}.
\]
Otherwise, let $\widetilde \Mcal_{x,1}$ be the center manifold associated to $T_{x,\gamma}\mres\Bbf_{6\sqrt{m}}$, with corresponding normal approximation $N_{x,1}$. 

For each $j\geq 2$, define $\widetilde\Mcal_{x,j}$ inductively as follows. If $\gamma^{j}$ lies in the same flattening as $\gamma^{j-1}$, let
\[
    \widetilde\Mcal_{x,j} := \iota_{0,\gamma}(\Mcal_{x,j-1}), \qquad \widetilde N_{x,j}(x) := \frac{N_{x,j-1}(\gamma x)}{\gamma}.
\]
Otherwise, let $\widetilde \Mcal_{x,j}$ be the center manifold associated to $T_{x,\gamma^j}\mres\Bbf_{6\sqrt{m}}$, with corresponding normal approximation $N_j$.

It follows from the excess decay \eqref{e:tilt-decay-NV} that around any $x\in \Sfrak$, we may replace the procedure in Section \ref{ss:compactness} with the intervals $(\gamma^{k+1},\gamma^k]$ in place of $(s_k,t_k]$, and with $\pmb{m}_{0,k}$ therein instead defined by  
\begin{equation}\label{e:rid-of-m0}
\pmb{m}_{x,k} = \mathbf{E} (T_{x, \gamma^k}, \Bbf_{6\sqrt{m}}) =
\mathbf{E} (T, \Bbf_{6\sqrt{m} \gamma^{k}} (x))\, .
\end{equation}
Observe that in particular, if $x\in\Sfrak$ originally has finitely many intervals of flattening with $(0,t_{j_0}]$ being the final interval, it will nevertheless have infinitely many adapted intervals of flattening, but for all $k$ sufficiently large, $\widetilde\Mcal_{x,k}$ and $\widetilde N_{x,k}$ are arising as rescalings of $\Mcal_{x,j_0}$ and $N_{x,j_0}$ respectively.

Abusing notation, let us henceforth simply write $\Mcal_{x,k}$ for the center manifold $\widetilde\Mcal_{x,k}$, with its corresponding normal approximation $N_{x,k}$. 

The key ideas of \cite{dlsk2} can be roughly summarized in the following steps:

\begin{itemize}
    \item[(1)] Exploit radial and spatial variations of the frequency function for $N_{x,k}$ (\cite[Lemma 10.7, Lemma 11.4]{dlsk2}) in order to obtain a quantitative control on the deviation of $N_{x,k}$ from being radially homogeneous in a given geodesic annulus on $\Mcal_{x,k}$, in terms of the pinching of the frequency between the two radial scales of the annulus; see \cite[Proposition 11.2]{dlsk2}. Note that these estimates do not require that $N_{x,k}$.

    \item[(2)] Obtain quantitative spine splitting and frequency pinching along $(m-2)$-dimensional subspaces via compactness procedures that rely on the unique continuation properties of the limiting Dir-minimizers; see \cite[Lemma 12.1, Lemma 12.2]{dlsk2}.
    \item[(3)] Obtain quantitative control on the $\beta_2$ coefficients associated to a suitable discrete approximation of the measure $\Hcal^{m-2}\mres \Sfrak$ (see \cite[Proposition 13.2]{dlsk2}, which in turn leads to an estimate on its square function.
    \item[(4)] Combine the above steps (1)-(3) with an iterative covering procedure (see \cite[Appendix A]{dlsk2}, that involves effectively covering all the points in $\Sfrak$ where the universal frequency remains pinched close to its local maximal value, so that the frequency drops by a discrete amount outside of this cover, allowing one to iterate a finite number of times.
    
\end{itemize}

We hence conclude the proof of our main result.

\begin{proof}[Proof of Theorem \ref{t:content}]
The proof is now an immediate corollary of Proposition \ref{p:nullset} and Theorem \ref{p:contentfromNV}, after choosing $\delta$ in Theorem \ref{p:contentfromNV} strictly smaller than $\delta$ in Proposition \ref{p:nullset} to ensure an overlap between the two sets.
\end{proof}

\newpage

%
%
%
%

%
%
%
%

\end{document}